\pdfoutput=1
\documentclass[11pt,reqno]{amsart}

\usepackage[margin=1in]{geometry}
\IfFileExists{glyphtounicode.tex}{\input{glyphtounicode}\pdfgentounicode=1}{}
\usepackage{cmap}
\usepackage[T1]{fontenc}
\usepackage[utf8]{inputenc}
\usepackage{lmodern}
\usepackage{microtype}
\usepackage{amsmath,amssymb,amsthm,mathrsfs}
\usepackage{enumitem}
\usepackage{booktabs}
\usepackage{cite}
\usepackage[bookmarks=true,bookmarksnumbered=true]{hyperref}
\hypersetup{
  colorlinks=true,
  linkcolor=blue,
  citecolor=blue,
  urlcolor=blue,
  pdftitle={Random Tensor Estimates and Deterministic Diagonal Resolutions for Mixed Paracontrolled Operators},
  pdfauthor={Guangqian Zhao},
  pdfsubject={Centered second-chaos estimates for mixed paracontrolled operators},
  pdfkeywords={Gaussian chaos, random tensors, paracontrolled operators, diagonal contractions, stochastic dispersive equations}
}
\newcommand{\doi}[1]{\href{https://doi.org/#1}{\nolinkurl{doi:#1}}}
\newcommand{\arxiv}[1]{\href{https://arxiv.org/abs/#1}{\nolinkurl{arXiv:#1}}}

\numberwithin{equation}{section}
\theoremstyle{plain}
\newtheorem{theorem}{Theorem}[section]
\newtheorem{proposition}[theorem]{Proposition}
\newtheorem{lemma}[theorem]{Lemma}
\newtheorem{corollary}[theorem]{Corollary}

\theoremstyle{definition}
\newtheorem{definition}[theorem]{Definition}
\newtheorem{assumption}[theorem]{Assumption}
\newtheorem{remark}[theorem]{Remark}

\newcommand{\T}{\mathbb T}
\newcommand{\R}{\mathbb R}
\newcommand{\Z}{\mathbb Z}
\newcommand{\N}{\mathbb N}
\newcommand{\E}{\mathbb E}

\newcommand{\cC}{\mathcal C}
\newcommand{\cI}{\mathcal I}
\newcommand{\cL}{\mathcal L}
\newcommand{\cM}{\mathcal M}
\newcommand{\eps}{\varepsilon}
\newcommand{\la}{\langle}
\newcommand{\ra}{\rangle}
\newcommand{\dd}{\,\mathrm d}
\newcommand{\ii}{\mathrm i}
\newcommand{\one}{\mathbf 1}
\newcommand{\wh}{\widehat}
\newcommand{\op}{\operatorname{op}}
\newcommand{\wick}[1]{:#1:}
\newcommand{\isoW}{\mathbb W}
\newcommand{\Lfin}{L_{\mathrm{fin}}}
\newcommand{\Ret}{\mathsf R}
\newcommand{\Ctr}{\mathsf C}

\title[Random Tensor Estimates]
{Random Tensor Estimates and Deterministic Diagonal Resolutions for Mixed Paracontrolled Operators}
\author{Guangqian Zhao}
\address{School of Mathematical Sciences, University of Science and Technology of China, Hefei, Anhui 230026, China}
\email{zhaoguangqian@mail.ustc.edu.cn}
\subjclass[2020]{60H15, 35L71, 35R60, 60H30, 35Q40}
\keywords{Gaussian chaos; random tensors; paracontrolled operators; diagonal contractions; stochastic dispersive equations}
\date{}

\begin{document}

\begin{abstract}
We prove an operator-level convergence theorem for the mixed paracontrolled
blocks
\[
  T^{i;j,k}_{\Lambda}(w)
  =I_i\bigl(w<\Psi_{j,\Lambda}\bigr)\circ\Psi_{k,\Lambda}
\]
on \(\mathbb T^d\), under Fourier-diagonal Gaussian covariance.  At finite
Galerkin cutoff, the Wick decomposition is
\[
  T^\tau_\Lambda=D^\tau_\Lambda+B^\tau_\Lambda,
  \qquad \tau=(i;j,k),
\]
where \(D^\tau_\Lambda\) is a deterministic Volterra multiplier on the
external diagonal \(n=q\), while \(B^\tau_\Lambda\) is a centered second
homogeneous Gaussian chaos with incidence \(n=q+\ell+r\).  The centered
coefficient has two Gaussian frequency legs and one input-output pair.  Write
\(\Delta_T=\{(t,s):0\le s\le t\le T\}\).  An iterated rectangular non-commutative Khintchine argument, followed by the
four associated oriented flattenings, yields
\[
  \bigl\|B^\tau_{\Lambda,N,Q,M}\bigr\|_
  {L^p\bigl(\Omega;C(\Delta_T;\mathcal L(\ell_q^2,\ell_n^2))\bigr)}
  \lesssim_{p,\varepsilon}
  N^{d/2-\Gamma_\tau+\varepsilon}
  \bigl(M^{d/2}+Q^{d/2}\bigr),
  \qquad
  \Gamma_\tau=\lambda_i+\alpha_j+\alpha_k.
\]
Under the resulting strict Sobolev--Besov summability conditions, the
centered cutoffs converge in \(L^p(\Omega)\) and almost surely in operator
norm.  The covariance contraction is kept separate from the tensor
estimate: retained raw branches, or prescribed finite combinations of
branches, enter through a scalar Volterra criterion imposed after the
relevant cancellation has been formed.  We give a frequency-envelope
variant and verify the diagonal criterion for distinct-speed wave and
Klein--Gordon contractions.
\end{abstract}

\maketitle

\section{Introduction}\label{sec:intro}

Singular stochastic PDEs are commonly formulated through enhanced random
objects which replace products not defined by deterministic distribution
theory.  In parabolic problems this principle underlies the Da
Prato--Debussche method, regularity structures and paracontrolled
distributions \cite{DaPratoDebussche,Hairer,GIP}.  For dispersive equations
some of the enhanced objects are naturally operator-valued: the Duhamel
multiplier retains oscillatory information even when the corresponding
deterministic resonant product is unavailable.  Such operators occur, for
example, in the paracontrolled construction of the three-dimensional
quadratic stochastic wave equation \cite{GKO}.  Random tensor estimates
provide a systematic way to control their Gaussian coefficients
\cite{DNYTensor,Kaneshiro}.

The tensor input is used here in a specific operator geometry.  The two
Gaussian legs, the low input leg and the output leg are kept distinct, and
the incidence relation determines four concrete flattening profiles.  The
probabilistic estimate is then coupled to the lower-chaos Fourier diagonal
and to the Sobolev--Besov summation.  General random tensor inequalities
supply the probabilistic input; the present contribution is the reduction
of the mixed paracontrolled block to this four-leg geometry and its coupling
to a separate Volterra resolution of the deterministic contraction.

We consider the mixed block
\begin{equation}\label{eq:intro-block}
        I_i(w<\Psi_j)\circ\Psi_k .
\end{equation}
Here \(i\) labels the outer Duhamel channel, while \(j\) and \(k\) label the
two stochastic factors.  We use a fixed-aperture low--high paraproduct with
strong frequency separation,
whose aperture is specified in Section~\ref{sec:setup}.  If the Duhamel
multiplier has gain \(\lambda_i\) and \(\Psi_a\) has Fourier amplitude of
order \(\langle n\rangle^{-\alpha_a}\), the usual deterministic resonant
product condition is
\[
        \lambda_i+\alpha_j+\alpha_k>d.
\]
We study the complementary singular range
\begin{equation}\label{eq:det-threshold}
        \lambda_i+\alpha_j+\alpha_k\le d .
\end{equation}

At a finite Galerkin cutoff the Wick identity gives
\begin{equation}\label{eq:intro-wick}
        \Psi_{j,\Lambda}(\ell,s)\Psi_{k,\Lambda}(r,t)
        =\wick{\Psi_{j,\Lambda}(\ell,s)\Psi_{k,\Lambda}(r,t)}
        +\E[\Psi_{j,\Lambda}(\ell,s)\Psi_{k,\Lambda}(r,t)].
\end{equation}
Hence we obtain the exact decomposition
\[
        T^\tau_\Lambda=D^\tau_\Lambda+B^\tau_\Lambda,
        \qquad \tau=(i;j,k).
\]
Fourier-diagonal covariance forces \(\ell+r=0\) in \(D^\tau_\Lambda\),
so that its external variables satisfy \(n=q\).  Thus the lower-chaos term
is a scalar diagonal Volterra multiplier.  The centered term retains the
incidence
\begin{equation}\label{eq:intro-incidence}
        n=q+\ell+r
\end{equation}
and is a second homogeneous Gaussian chaos.

The coefficient tensor of a dyadic centered block has four legs:
\[
        (\ell,\mu),\qquad (r,\lambda),\qquad q,\qquad n,
\]
where the first two are Gaussian legs and the last two are the deterministic
input and output legs.  Proposition~\ref{prop:tensor} controls the random
operator by the four oriented flattenings arising from these indices.  The
incidence \eqref{eq:intro-incidence} then gives, with
\(\Gamma_\tau=\lambda_i+\alpha_j+\alpha_k\),
\begin{equation}\label{eq:intro-profile}
        \|B_{\Lambda,N,Q,M}^{\tau}\|_{\ell_q^2\to\ell_n^2}
        \lesssim
        N^{d/2-\Gamma_\tau}\bigl(M^{d/2}+Q^{d/2}\bigr),
\end{equation}
uniformly in \(\Lambda\), up to logarithmic and arbitrarily small dyadic
losses.  Here \(N\) is the high stochastic scale, \(Q\) the input scale and
\(M\) the output scale.  The proof uses only \(Q\le c_{\rm ap}N\) and
\(M\lesssim N\).  The estimate yields
the admissible window
\begin{equation}\label{eq:intro-window}
\begin{gathered}
        \Gamma_\tau>\frac d2,\qquad
        s<\lambda_i+\Gamma_\tau-d,\\
        \max\{0,d-\Gamma_\tau\}<\sigma
        <\lambda_i+\Gamma_\tau-d,
\end{gathered}
\end{equation}
with strict margins.  Under \eqref{eq:intro-window}, the centered cutoffs
converge in \(L^p(\Omega)\) and almost surely as operators from
\(X_T^\sigma\) to
\(C_TH^{s-\lambda_i}\cap L_T^1B^{\sigma-\lambda_i}_{2,\infty}\).

The diagonal term is treated separately.  We choose a resolution
\(\mathfrak P\), given at cutoff level by
\begin{equation}\label{eq:intro-resolution}
        D^\tau_\Lambda=\Ret^\tau_\Lambda+\Ctr^\tau_\Lambda,
        \qquad
        T^{\tau,\mathfrak P}_\Lambda
        :=T^\tau_\Lambda-\Ctr^\tau_\Lambda
        =\Ret^\tau_\Lambda+B^\tau_\Lambda .
\end{equation}
Here \(\Ctr^\tau_\Lambda\) is subtracted and the retained term
\(\Ret^\tau_\Lambda\) satisfies a dyadic scalar Volterra bound.  The
definition permits a prescribed finite linear combination of scalar
diagonal branches, with the estimate imposed after that combination has
been formed.  Phase separation for distinct propagation speeds supplies a
concrete verification of this hypothesis.  This scalar analysis is the
deterministic counterpart of the four-leg centered estimate.  Model-dependent
diagonal classification and finite branch cancellation are encoded in the
explicit scalar Volterra hypothesis.

The mixed operators studied here are motivated by the two-speed
Klein--Gordon analysis in \cite{ZhaoColorPhase}.  The present paper isolates
the operator estimate and its cutoff convergence; the distinct-speed
diagonal calculation is included in Section~\ref{sec:verification}.
A subsequent companion paper \cite{ZhaoLocalized} develops a
continuous-frequency analogue of the centered four-leg estimate, with soft
incidence kernels on \(\mathbb R^d\).  The exact Fourier-diagonal contraction,
its deterministic Volterra resolution, and the frequency-envelope
formulation are specific to the present setting.

Section~\ref{sec:formulation} states the assumptions and the main theorem.
Section~\ref{sec:proof} contains the proof,
Section~\ref{sec:verification} treats the distinct-speed diagonal estimate,
and Appendix~\ref{subsec:envelope-variant} gives the frequency-envelope
extension.

\section{Setting and main theorem}\label{sec:formulation}

\subsection{Littlewood--Paley blocks and Bony products}\label{sec:setup}
Let \(P_N=\Delta_N\) be smooth dyadic Littlewood--Paley projectors on \(\T^d\), with \(N\in2^{\N_0}\); we use the standard notation and estimates of paradifferential calculus, see \cite{BCD}.  All Fourier variables lie in \(\Z^d\).  We use the shell count
\begin{equation}\label{eq:shell-count}
        \#\{n\in\Z^d: |n|\sim N\}\lesssim N^d
\end{equation}
and the standard inhomogeneous convention at low frequency.  Low-frequency blocks are absorbed into constants.  Fix a proportional aperture \(c_{\rm ap}\in(0,1)\), possibly small enough for the phase bounds used by the retained diagonal terms, and define
\begin{equation}\label{eq:bony}
        f<g=\sum_N S_{<c_{\rm ap}N}f\,P_Ng,
        \qquad
        f>g=g<f,
        \qquad
        f\circ g=\sum_{N\sim M}P_Nf\,P_Mg,
\end{equation}
where \(S_{<c_{\rm ap}N}=\sum_{Q<c_{\rm ap}N}P_Q\), with the usual smooth transition near the cutoff edge.  We write \(\chi_Q,\chi_M\) for the smooth Fourier cutoffs associated with \(P_Q,P_M\), \(\rho_N\) for the high stochastic-leg cutoff, and \(\chi_{\rm res}\) for the fixed smooth resonant cutoff in the channel \(|q+\ell|\sim |r|\).  Any fixed smooth cutoff with finite overlap is admissible.  Thus every paraproduct block satisfies
\begin{equation}\label{eq:aperture}
        Q\le c_{\rm ap}N
\end{equation}
up to constants fixed by the cutoff profile.  All implicit constants may depend on the selected aperture, but never on dyadic scales, time grids or Galerkin cutoffs.
Throughout the paper the symbol \(<\) refers to this fixed-aperture
low--high block.

We use the dyadic Galerkin cutoffs \(\Lambda\in2^{\N_0}\), with multiplier \(c_\Lambda(n)=\one_{|n|\le\Lambda}\).  Thus every cutoff operator is a finite Fourier operator, and each fixed dyadic block is eventually independent of \(\Lambda\).  When a variable is restricted to a fixed dyadic support, notation such as \(\ell_q^2\) or \(\ell_n^2\) denotes the corresponding finite-dimensional \(\ell^2\)-space.  The subscript records the frequency variable, not an additional weight.

For a dispersive channel \(i\), write
\begin{equation}\label{eq:duhamel}
        I_iF(t)=\int_0^t K_i(t-s,D)F(s)\dd s.
\end{equation}
The mixed block attached to a triple \((i;j,k)\) is
\begin{equation}\label{eq:block}
        T^{i;j,k}(w)=I_i(w<\Psi_j)\circ\Psi_k .
\end{equation}
The index \(i\) is the Duhamel channel; the indices \(j,k\) are the colors of the stochastic legs.

\subsection{Gaussian Hilbert space and Fourier-diagonal covariance}\label{subsec:gaussian-model}
Let \(\cC\) be a finite set of colors.  The Gaussian variables are realized on a real separable Hilbert space \(\mathfrak H\) with isonormal process \(\isoW:\mathfrak H\to L^2(\Omega)\).  We extend \(\isoW\) complex-linearly to the complexification \(\mathfrak H_{\mathbb C}\); its non-conjugated covariance is the complex-bilinear extension of the real inner product.  For each color \(a\), Fourier label \(n\in\Z^d\), and time \(t\in[0,T_0]\), write
\begin{equation}\label{eq:isonormal-coeff}
        \widehat\Psi_a(n,t)=\isoW(h_{a,n,t}),
        \qquad h_{a,n,t}\in\mathfrak H_\mathbb C,
\end{equation}
with the real-valued convention
\begin{equation}\label{eq:real-convention}
        \overline{\widehat\Psi_a(n,t)}=\widehat\Psi_a(-n,t).
\end{equation}
Thus the complex notation is shorthand for the corresponding real sine-cosine coordinates on each involution class \(\{n,-n\}\).  The variables indexed by \(n\) and \(-n\) are not independent; they are two complex encodings of the same real sine-cosine pair.  The covariance convention below is written in non-conjugated Fourier form, hence Fourier diagonality appears as the constraint \(n+m=0\).

Set
\[
        C_{ab}(n;t,s):=R_{ab}(n)\sigma_{ab}(n;t,s).
\]
The covariance is Fourier diagonal: for all \(a,b\in\cC\), all \(m,n\in\Z^d\), and all \(s,t\in[0,T_0]\),
\begin{equation}\label{eq:conv-cov}
        \E\bigl[\widehat\Psi_a(n,t)\widehat\Psi_b(m,s)\bigr]
        =\one_{n+m=0}C_{ab}(n;t,s).
\end{equation}
Symmetry of the complex-bilinear covariance and the real-valued convention
require
\begin{equation}\label{eq:cov-compat}
        C_{ba}(-n;s,t)=C_{ab}(n;t,s),
        \qquad
        C_{ab}(-n;t,s)=\overline{C_{ab}(n;t,s)}.
\end{equation}
For each involution class \(\{n,-n\}\), the induced covariance matrix on color and real sine-cosine coordinates is positive semidefinite.  We choose the factorization \(C_{ab}=R_{ab}\sigma_{ab}\) with \(R_{aa}(n)=1\).  For \(a\ne b\), assume
\begin{equation}\label{eq:cov-decay}
        |R_{ab}(n)|\le C_R\la n\ra^{-\kappa_{ab}},
        \qquad \kappa_{ab}\ge0.
\end{equation}
We set \(\kappa_{aa}=0\).  If \(R_{ab}\equiv0\) for \(a\ne b\), the corresponding cross-color diagonal is absent.

The spatial cutoff makes the Fourier sums finite.  The time-indexed Gaussian process is handled through the isonormal Hilbert space above.  Every probabilistic estimate below is first applied to a finite set of times and Fourier labels, hence to a finite Gaussian vector obtained from \(\isoW\); the continuous-time version is then obtained from the increment bounds in Assumption~\ref{ass:coeff} by the Banach-valued time-lift lemma.  The centered tensor estimate uses only the following coefficient bounds.

\begin{assumption}[Channel-dependent coefficient bounds]\label{ass:coeff}
Fix \(0<T_0\le1\).  To every Duhamel channel \(i\) assign a gain \(0<\lambda_i<\infty\), and to every stochastic color \(a\) assign a gain \(0<\alpha_a<\infty\).  For a mixed triple \((i;j,k)\), set
\begin{equation}\label{eq:Gamma-def}
        \Gamma_{i;j,k}:=\lambda_i+\alpha_j+\alpha_k,
        \qquad
        \chi_{i;j,k}:=\max\{\lambda_i,\alpha_j,\alpha_k\}.
\end{equation}
There exists \(\eta_0>0\) such that, for every \(0<\theta\le\eta_0\), for \(|n|\sim N\) and \(0\le t,t'\le T_0\),
\begin{align}
        |K_i(t,n)|&\le C_K N^{-\lambda_i},\label{eq:K-size}\\
        |K_i(t,n)-K_i(t',n)|&\le C_{K,\theta}|t-t'|^\theta N^{-\lambda_i+\lambda_i\theta},\label{eq:K-inc}\\
        \E|\wh\Psi_a(n,t)|^2&\le C_\Psi N^{-2\alpha_a},\label{eq:Psi-size}\\
        \E|\wh\Psi_a(n,t)-\wh\Psi_a(n,t')|^2&\le C_{\Psi,\theta}|t-t'|^{2\theta}N^{-2\alpha_a+2\alpha_a\theta}.
        \label{eq:Psi-inc}
\end{align}
The constants may depend on \(T_0\), \(\theta\) and the finite channel family, but not on the dyadic shell or on the cutoff.  The small exponent \(\theta\) is chosen after the strict dyadic loss \(\eps\) has been fixed, so that factors \(N^{\chi_{i;j,k}\theta}\) are absorbed by the available margins.  This power-law formulation is the main one used in Sections~\ref{sec:split-diag}--\ref{sec:assembly}.  Section~\ref{subsec:envelope-variant} gives the corresponding statement with general dyadic envelopes \(K_i(N)\) and \(A_a(N)\).
\end{assumption}

\subsection{Input and output spaces}
Let \(\cI(\cM)\) be the finite set of outer Duhamel labels appearing in the mixed family \(\cM\).  For \(0<T\le T_0\) and \(\sigma\in\R\), set
\begin{equation}\label{eq:X-space}
        X_T^\sigma:=C_TL^2\cap L_T^\infty B^\sigma_{2,\infty},
        \qquad
        \|w\|_{X_T^\sigma}:=\|w\|_{C_TL^2}+\|w\|_{L_T^\infty B^\sigma_{2,\infty}}.
\end{equation}
For a small fixed \(\eps>0\), define
\begin{equation}\label{eq:E-space}
\begin{aligned}
        E_{T,\cM}^{2,\sigma}:=\{w:&\ w\in X_T^\sigma,\\
        &\partial_tw\in \bigcap_{i\in\cI(\cM)}
          \bigl(C_TH^{-\lambda_i-\eps}\cap L_T^\infty B^{\sigma-\lambda_i}_{2,\infty}\bigr)\}.
\end{aligned}
\end{equation}
The norm is
\begin{equation}\label{eq:E-norm}
\begin{aligned}
        \|w\|_{E_{T,\cM}^{2,\sigma}}:=&\ \|w\|_{X_T^\sigma}\\
        &+\sum_{i\in\cI(\cM)}
        \left(\|\partial_tw\|_{C_TH^{-\lambda_i-\eps}}
        +\|\partial_tw\|_{L_T^\infty B^{\sigma-\lambda_i}_{2,\infty}}\right).
\end{aligned}
\end{equation}
For a single triple, this is the corresponding space with the single outer exponent \(\lambda_i\).  The derivative \(\partial_t w\) is understood in the distributional sense in time.  In the Volterra integration by parts used below, this means that, after applying each spatial dyadic projector, \(P_Qw(t)-P_Qw(0)=\int_0^t P_Q\partial_sw(s)\,\dd s\) in the indicated Sobolev--Besov topology.  The derivative component of this norm is used only by retained deterministic diagonals after this integration by parts.  The centered second-chaos estimates use the source space \(X_T^\sigma\).

\begin{definition}[Centered-admissible exponents]\label{def:admissible}
Given a mixed triple \((i;j,k)\) and \(\eps>0\), a pair \((s,\sigma)\) is \((i;j,k,\eps)\)-centered admissible if
\begin{equation}\label{eq:centered-window}
        \Gamma_{i;j,k}>\frac d2+10\eps,
        \qquad
        s<\lambda_i+\Gamma_{i;j,k}-d-10\eps,
        \qquad
        \max\{0,d-\Gamma_{i;j,k}\}+10\eps
        <\sigma<\lambda_i+\Gamma_{i;j,k}-d-10\eps .
\end{equation}
The underlying zero-loss \(\sigma\)-interval is non-empty precisely when
\begin{equation}\label{eq:nonempty-window}
        \max\{0,d-\Gamma_{i;j,k}\}<\lambda_i+\Gamma_{i;j,k}-d.
\end{equation}
For the \(\eps\)-window in \eqref{eq:centered-window}, the gap between
the two endpoints is required to exceed \(20\eps\); throughout the paper
\(\eps\) is chosen below the available strict margin.  In the singular subrange \(\Gamma_{i;j,k}\le d\), this is the stochastic summability condition
\begin{equation}\label{eq:nonempty-singular}
        2\Gamma_{i;j,k}+\lambda_i>2d.
\end{equation}
The first condition in \eqref{eq:centered-window} is the finite random-matrix summability requirement coming from the high-loop factor \(N^{d/2-\Gamma_{i;j,k}}\).  In the equal-gain case it is \(3\alpha>d/2\), which is weaker than the non-empty singular-window condition \(\alpha>2d/7\).  The gap comes from the operator summation after insertion of the low input.  The \(Q^{d/2}\)-branch forces the lower bound on \(\sigma\), while the \(M^{d/2}\)-branch forces the upper bound; the two inequalities overlap in the singular range exactly when \(2\Gamma+\lambda>2d\).
\end{definition}

Throughout, the finite family, aperture and target exponents are fixed
first.  We then choose \(\eps>0\) below all strict margins and finally choose
the time H\"older exponent \(\theta\) so that
\(\chi_\tau\theta<\eps/4\) for every \(\tau\in\cM\).  Constants may depend
on these data, but not on dyadic scales or Galerkin cutoffs.

\subsection{Deterministic diagonal resolutions}\label{subsec:contraction-resolution}
Let \(\tau=(i;j,k)\).  The finite Wick split in Section~\ref{sec:split-diag} produces a deterministic diagonal operator \(D^\tau_\Lambda\) and a centered second-chaos operator \(B^\tau_\Lambda\).  On dyadic shells, the diagonal kernel is
\begin{equation}\label{eq:D-formulation}
\begin{aligned}
        D_{\Lambda,N,Q}^{\tau}(q;t,s)
        :=&\ \chi_Q(q)
        \sum_\ell \rho_N(\ell)\rho_N(-\ell)\chi_{\rm res}(q+\ell,-\ell)\\
        &\times K_i(t-s,q+\ell)c_\Lambda(\ell)c_\Lambda(-\ell)
        R_{jk}(\ell)\sigma_{jk}(\ell;s,t),
\end{aligned}
\end{equation}
with \(D_{\Lambda,N,Q}^{\tau}=0\) when the covariance channel \((j,k)\) is absent.  It acts by
\begin{equation}\label{eq:D-action-formulation}
        \widehat{D^\tau_{\Lambda,N,Q}w}(q,t)
        =\int_0^tD^\tau_{\Lambda,N,Q}(q;t,s)\widehat w(q,s)\dd s .
\end{equation}
The full finite diagonal is the finite sum over \((N,Q)\).

For the target channel \(i\), define
\begin{equation}\label{eq:Y-space}
        Y_{T,i}^{s,\sigma}
        :=C_TH^{s-\lambda_i}\cap L_T^1B^{\sigma-\lambda_i}_{2,\infty}
\end{equation}
with the sum norm.

\begin{definition}[Admissible deterministic diagonal resolution]\label{def:contraction-resolution}
A deterministic diagonal resolution for a finite family \(\cM\) assigns to
each label \(\tau\in\cM\) deterministic diagonal Volterra operators
\(\Ret^\tau_\Lambda\) and \(\Ctr^\tau_\Lambda\) such that
\begin{equation}\label{eq:resolution}
        D^\tau_\Lambda=\Ret^\tau_\Lambda+\Ctr^\tau_\Lambda .
\end{equation}
The first term is retained and the second is subtracted from the raw mixed
block.  The resolution is dyadically admissible if, for each label with
\(\Ret^\tau_\Lambda\ne0\), there exists \(\delta_\tau>0\) such that
\begin{equation}\label{eq:retained-block-bound}
        \|\Ret^\tau_{\Lambda,N,Q}P_Qw\|_{C_TL^2}
        \le C_{\mathfrak P}N^{-\delta_\tau}
        \bigl(\|P_Qw\|_{L_T^\infty L^2}
        +\|P_Q\partial_tw\|_{L_T^\infty L^2}\bigr)
\end{equation}
for \(0<T\le T_0\) and \(Q\le c_{\rm ap}N\), uniformly in \(\Lambda\).
For every fixed \((N,Q)\), the corresponding two-time scalar kernels are
also required to converge in
\(C_{t,s}\cL(\ell_q^2,\ell_q^2)\).

The retained term may be any prescribed finite linear combination of scalar
diagonal branches, provided the combined kernel satisfies
\eqref{eq:retained-block-bound} and fixed-block convergence.  The raw
resolution is \(\Ret^\tau_\Lambda=D^\tau_\Lambda\), while the fully
Wick-subtracted resolution is \(\Ret^\tau_\Lambda=0\).
\end{definition}

The finite resolved cutoff operator is therefore
\begin{equation}\label{eq:resolved-cutoff}
        T^{\tau,\mathfrak P}_\Lambda
        :=T^\tau_\Lambda-\Ctr^\tau_\Lambda
        =\Ret^\tau_\Lambda+B^\tau_\Lambda .
\end{equation}
Every retained diagonal is a scalar Volterra multiplier in the low Fourier
label \(q\).  Assumption~\ref{ass:diag} gives a convenient phase-based
criterion for a raw branch; Proposition~\ref{prop:symbol-criterion} applies
directly to any prescribed finite combination.

\begin{assumption}[Raw Volterra contraction criterion]\label{ass:diag}
Consider a mixed triple \(\tau=(i;j,k)\) whose covariance channel is nonzero.  There exists a number
\begin{equation}\label{eq:diag-gain}
        \Delta_{\tau}>20\eps
\end{equation}
such that every dyadic diagonal summand
\begin{equation}\label{eq:diag-summand}
        K_i(t-s,q+\ell)R_{jk}(\ell)\sigma_{jk}(\ell;s,t),
        \qquad |\ell|\sim N,\quad |q|\le c_{\rm ap}N,
\end{equation}
can be decomposed into an oscillatory part and an absolutely summable remainder as follows.

\begin{enumerate}[label=(D\arabic*),leftmargin=2.2em]
\item There is a finite index set \(\mathfrak V_\tau\), independent of \(N,Q,\Lambda\), such that the oscillatory part is a finite sum
\begin{equation}\label{eq:osc-decomp}
        \sum_{\nu\in\mathfrak V_\tau}
        e^{\ii(t-s)\Phi_\nu(q,\ell)}a_\nu(q,\ell;t,s).
\end{equation}
For some \(\vartheta_\nu\ge0\),
\begin{equation}\label{eq:phase-lower}
        |\Phi_\nu(q,\ell)|\ge c_\Phi N^{\vartheta_\nu}
\end{equation}
whenever the branch is integrated by parts, and
\begin{equation}\label{eq:amp-bound}
        |a_\nu(q,\ell;t,s)|+|\partial_sa_\nu(q,\ell;t,s)|
        \le C_\Phi N^{-\Gamma_\tau-\kappa_{jk}}
\end{equation}
for \(0\le s\le t\le T_0\).  The loop gain satisfies
\begin{equation}\label{eq:osc-gain}
        \kappa_{jk}+\vartheta_\nu+\Gamma_\tau-d\ge \Delta_\tau .
\end{equation}
If \(\vartheta_\nu=0\), no phase denominator is used; then the branch is included only through the summability in \eqref{eq:osc-gain}.

\item The remainder \(e_{N,Q}(q,\ell;s,t)\) satisfies the loop bound
\begin{equation}\label{eq:remainder-loop}
        \sup_{0\le s\le t\le T_0}\sup_{|q|\le c_{\rm ap}N}
        \sum_{|\ell|\sim N}|e_{N,Q}(q,\ell;s,t)|
        \le C_\Phi N^{-\Delta_\tau}.
\end{equation}
\end{enumerate}

\end{assumption}

\begin{remark}[Phase gains in the criterion]\label{rem:phase-gains}
The exponent \(\vartheta_\nu\) is used only when the phase has a uniform
lower bound on the chosen aperture.  Otherwise we set
\(\vartheta_\nu=0\) and require summability from covariance decay,
absolute bounds, or a grouped scalar estimate.  Distinct-speed wave and
Klein--Gordon phases have \(\vartheta_\nu=1\); see
Section~\ref{sec:verification}.
\end{remark}

\begin{proposition}[Retained scalar diagonal criterion]\label{prop:symbol-criterion}
Let a deterministic diagonal resolution be chosen for one label
\(\tau=(i;j,k)\), and let \(\Ret_{\Lambda,N,Q}^\tau\) be the retained
diagonal Volterra block.  Suppose that its scalar kernel, after any
prescribed finite combinations have been formed, is a finite sum of
components of the following two types.
\begin{enumerate}[label=(\alph*),leftmargin=2.2em]
\item Absolute kernels \(r_{N,Q}(q;t,s)\) satisfying
\begin{equation}\label{eq:absolute-symbol-criterion}
        \sup_{0\le t\le T}\int_0^t
        \sup_{|q|\sim Q}|r_{N,Q}(q;t,s)|\,\dd s
        \le C N^{-\delta} .
\end{equation}
\item Oscillatory loop kernels of the form
\begin{equation}\label{eq:osc-loop-kernel}
        r_{N,Q}(q;t,s)
        =\sum_{|\ell|\sim N}\sum_{\nu\in\mathfrak V}
        e^{\ii(t-s)\Phi_\nu(q,\ell)}a_\nu(q,\ell;t,s),
\end{equation}
where \(\mathfrak V\) is finite, \(|\Phi_\nu(q,\ell)|>0\) on the retained support, and
\begin{equation}\label{eq:osc-symbol-criterion}
        \sup_{0\le s\le t\le T}\sup_{|q|\sim Q}
        \sum_{|\ell|\sim N}\sum_{\nu\in\mathfrak V}
        \frac{|a_\nu(q,\ell;t,s)|+|\partial_sa_\nu(q,\ell;t,s)|}{|\Phi_\nu(q,\ell)|}
        \le C N^{-\delta} .
\end{equation}
A post-loop phase \(\Phi(q)\) is the special case in which the \(\ell\)-sum has already been absorbed into \(a\).
\end{enumerate}
Then \(\Ret_{\Lambda,N,Q}^\tau\) satisfies the dyadic retained bound
\eqref{eq:retained-block-bound} with gain \(\delta\), uniformly in
\(\Lambda\).  The assertion applies directly to a prescribed finite sum of
scalar branches.
\end{proposition}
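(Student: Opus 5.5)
The retained block acts diagonally in \(q\), so by Plancherel it suffices to prove a pointwise-in-\(q\) bound
\[
\sup_{0\le t\le T}\Bigl|\int_0^t r_{N,Q}(q;t,s)\,\wh w(q,s)\,\dd s\Bigr|
\le CN^{-\delta}\Bigl(\sup_{s\le T}|\wh w(q,s)|+\sup_{s\le T}|\partial_s\wh w(q,s)|\Bigr),
\]
uniformly in \(|q|\sim Q\) and \(\Lambda\). We then square and sum over \(q\) with \(\chi_Q(q)\). The supremum in \(s\) sits inside the \(q\)-sum, whereas the target norm \(\|P_Qw\|_{L^\infty_TL^2}\) has it outside. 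So we instead bound \(\int_0^t|r(q;t,s)||\wh w(q,s)|\dd s\) and use Minkowski/Cauchy--Schwarz in the \(s\)-integral. This gives \(\|\int_0^t r(\cdot;t,s)\wh w(\cdot,s)\dd s\|_{\ell^2_q}\le\int_0^t\sup_q|r(q;t,s)|\,\|\wh w(s)\|_{\ell^2_q}\dd s\). By linearity we treat each component separately and then sum the finitely many pieces. Continuity in \(t\) holds at each finite cutoff because the sums are finite and the kernels are continuous. Thus the \(C_T\) norm reduces to the supremum bound.

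\emph{Type (a).} Pull \(\sup_q|r_{N,Q}|\) out as above. Then \(\|\cdot\|_{\ell^2_q}\le \sup_s\|P_Qw(s)\|_{L^2}\int_0^t\sup_{|q|\sim Q}|r_{N,Q}(q;t,s)|\dd s\le CN^{-\delta}\|P_Qw\|_{L^\infty_TL^2}\), directly by \eqref{eq:absolute-symbol-criterion}.

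\emph{Type (b).} For each \(\ell,\nu\) we integrate by parts in \(s\), using
\[
e^{\ii(t-s)\Phi}=\frac{-1}{\ii\Phi}\,\partial_s e^{\ii(t-s)\Phi},
\]
which is legitimate since \(\Phi_\nu\ne0\) on the retained support. This gives
\[
\int_0^t e^{\ii(t-s)\Phi}a\,\wh w\,\dd s
=\frac{-1}{\ii\Phi}\Bigl[a(t,t)\wh w(q,t)-e^{\ii t\Phi}a(t,0)\wh w(q,0)\Bigr]
+\frac{1}{\ii\Phi}\int_0^t e^{\ii(t-s)\Phi}\bigl(\partial_sa\,\wh w+a\,\partial_s\wh w\bigr)\dd s .
\]
The justification of this step for \(w\in E^{2,\sigma}\) comes from the convention stated after \eqref{eq:E-norm}: \(P_Qw\) is absolutely continuous in time with derivative \(P_Q\partial_tw\). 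Absolute continuity is needed because \(\partial_t w\) is only a distributional derivative. We then take absolute values, sum over \(\ell\) and \(\nu\), and bound each term by the supremum-in-\((s,t)\) quantity in \eqref{eq:osc-symbol-criterion}. The two boundary terms contribute \(2CN^{-\delta}|\wh w(q,\cdot)|\) at fixed times. The integral term contributes at most \(T\le T_0\le1\) times \(CN^{-\delta}\sup_s(|\wh w|+|\partial_s\wh w|)\), after the same Minkowski step in \(\ell^2_q\).

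Taking \(\ell^2_q\) norms then yields \eqref{eq:retained-block-bound}. For the boundary terms we take the supremum over \(q\) of the kernel factor before the \(\ell^2\) norm of \(\wh w(\cdot,t)\) or \(\wh w(\cdot,0)\). For the integral term we use the Minkowski argument. The post-loop phase case is identical with the \(\ell\)-sum absent. The main care point is precisely this order of suprema, since \(\sup_q\) must be taken on the kernel and never on \(w\). It is handled because the criterion supplies \(\sup_q\) of the summed kernel quantities. Uniformity in \(\Lambda\) holds because the hypotheses are uniform in \(\Lambda\). The final assertion about finite combinations follows because the argument is linear in the kernel and uses the criterion only after the combination has been formed.
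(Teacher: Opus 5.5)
Your proposal is correct and is essentially the paper's argument. The absolute kernels are handled by Minkowski's inequality. For the oscillatory kernels you integrate by parts in the lower time variable for each fixed \((q,\ell,\nu)\), then sum the boundary, amplitude-derivative and input-derivative terms over the loop and control them by \eqref{eq:osc-symbol-criterion}; your care with the order of suprema (\(\sup_q\) on the kernel, Minkowski in \(s\) before the \(\ell^2_q\) norm) only spells out a step the paper leaves implicit.
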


\begin{proof}
The absolute kernels are immediate from Minkowski's inequality.  For an oscillatory loop kernel, fix \(t\), \(q\), \(\ell\), and \(\nu\).  Write \(a_s=a_\nu(q,\ell;t,s)\) and \(\Phi=\Phi_\nu(q,\ell)\).  Integration by parts in the lower time variable gives
\begin{align*}
        \int_0^t e^{\ii(t-s)\Phi}a_s\widehat w(q,s)\,\dd s
        &=\frac{e^{\ii t\Phi}a_\nu(q,\ell;t,0)\widehat w(q,0)
        -a_\nu(q,\ell;t,t)\widehat w(q,t)}{\ii\Phi}\\
        &\quad +\int_0^t\frac{e^{\ii(t-s)\Phi}}{\ii\Phi}
        \bigl(\partial_sa_\nu(q,\ell;t,s)\widehat w(q,s)
        +a_\nu(q,\ell;t,s)\partial_s\widehat w(q,s)\bigr)\,\dd s .
\end{align*}
The estimate \eqref{eq:osc-symbol-criterion} is uniform at the boundary values \(s=0\) and \(s=t\).  After summing over \(\ell\) and the finite index set \(\mathfrak V\), the boundary terms, the amplitude-derivative term and the input-derivative term are bounded by
\[
        C N^{-\delta}
        \bigl(\|P_Qw\|_{L_T^\infty L^2}+\|P_Q\partial_tw\|_{L_T^\infty L^2}\bigr).
\]
This is \eqref{eq:retained-block-bound}.  Finite sums of absolute and oscillatory components only change the constant.
\end{proof}

\subsection{Main theorem}\label{sec:main}

The centered term and the deterministic diagonal are governed by separate
hypotheses.  The former uses the covariance and coefficient bounds; the
latter uses only Definition~\ref{def:contraction-resolution}.

Let \(\cM\) be a finite collection of triples \(\tau=(i;j,k)\).  For each cutoff \(\Lambda\in2^{\N_0}\), define
\begin{equation}\label{eq:finite-operator}
        T_{\Lambda}^{\tau}(w)=I_i(w<\Psi_{j,\Lambda})\circ\Psi_{k,\Lambda},
        \qquad \tau=(i;j,k)\in\cM,
\end{equation}
using the fixed Bony convention \eqref{eq:bony}.

\begin{theorem}[Mixed operator convergence with deterministic diagonal resolution]\label{thm:main}
Fix \(\eps>0\), \(0<T_0\le1\), a finite color set \(\cC\), a finite family \(\cM\) of mixed triples, and a fixed Bony aperture \(c_{\rm ap}\).  Assume the Fourier-diagonal time covariance model \eqref{eq:conv-cov}--\eqref{eq:cov-decay} and the coefficient bounds of Assumption~\ref{ass:coeff}.  Let \((s,\sigma)\) be \((\tau,\eps)\)-centered admissible in the sense of Definition~\ref{def:admissible} for every \(\tau\in\cM\).

Choose a dyadically admissible deterministic diagonal resolution
\(\mathfrak P=\{(\Ret^\tau_\Lambda,\Ctr^\tau_\Lambda)\}_{\tau\in\cM}\)
in the sense of Definition~\ref{def:contraction-resolution}.  Assume that
every nonzero retained diagonal has gain \(\delta_\tau>20\eps\) and that
\(s<\delta_\tau-10\eps\) for those labels.

For each \(\tau=(i;j,k)\), define the finite \(\mathfrak P\)-resolved operator
\begin{equation}\label{eq:ren-operator-general}
        T_{\Lambda}^{\tau,\mathfrak P}:=T_{\Lambda}^{\tau}-\Ctr^\tau_\Lambda .
\end{equation}
Then, for every \(0<T\le T_0\), the following assertions hold.
The full-probability event below may be chosen for \(T_0\) and therefore
works simultaneously for every \(T\le T_0\).

\begin{enumerate}[label=(\roman*),leftmargin=2.4em]
\item \textup{Finite Wick split.}  The cutoff operator admits the exact finite-cutoff Wick decomposition
\begin{equation}\label{eq:main-split}
        T_{\Lambda}^{\tau}=D_{\Lambda}^{\tau}+B_{\Lambda}^{\tau}.
\end{equation}
The diagonal part \(D_{\Lambda}^{\tau}\) is deterministic and supported by \(n=q\) in the external variables.  The centered part \(B_{\Lambda}^{\tau}\) is a finite sum of second homogeneous Gaussian chaoses and preserves the incidence
\begin{equation}\label{eq:external-incidence}
        n=q+\ell+r.
\end{equation}
Consequently,
\begin{equation}\label{eq:ren-split}
        T_{\Lambda}^{\tau,\mathfrak P}=\Ret^\tau_\Lambda+B^\tau_\Lambda.
\end{equation}

\item \textup{Centered operator component.}  For every finite \(p\ge2\), the centered operators \(B^\tau_\Lambda\) converge in \(L^p(\Omega)\) and almost surely in the centered operator topology
\begin{equation}\label{eq:centered-op-topology}
        \cL(X_T^\sigma,C_TH^{s-\lambda_i})
        \cap
        \cL(X_T^\sigma,L_T^1B^{\sigma-\lambda_i}_{2,\infty}).
\end{equation}
The limit \(B^\tau\) satisfies the operator-norm estimates
\begin{align}
        \|B^\tau\|_{L^p(\Omega;\cL(X_T^\sigma,C_TH^{s-\lambda_i}))}
        &\le C_p,\label{eq:centered-H}\\
        \|B^\tau\|_{L^p(\Omega;\cL(X_T^\sigma,L_T^1B^{\sigma-\lambda_i}_{2,\infty}))}
        &\le C_p.\label{eq:centered-B}
\end{align}
There is a full-probability event, independent of \(\tau\in\cM\), on which all centered cutoff sequences are Cauchy in the pathwise operator norm \eqref{eq:centered-op-topology}.  The centered estimate is an \(X_T^\sigma\)-estimate; the derivative components of \(E_{T,\cM}^{2,\sigma}\) are needed only when a retained diagonal is present.

\item \textup{Retained deterministic component.}  The retained diagonals converge to deterministic operators \(\Ret^\tau\) satisfying
\begin{equation}\label{eq:retained-main-bound}
        \|\Ret^\tau w\|_{C_TH^{s-\lambda_i}}
        +\|\Ret^\tau w\|_{L_T^1B^{\sigma-\lambda_i}_{2,\infty}}
        \le C\|w\|_{E_{T,\cM}^{2,\sigma}} .
\end{equation}

\item \textup{Assembled operator.}  On the same full-probability event,
\begin{equation}\label{eq:assembled-limit}
        T_{\Lambda}^{\tau,\mathfrak P}
        \longrightarrow
        T^{\tau,\mathfrak P}:=\Ret^\tau+B^\tau
\end{equation}
in the assembled topology
\begin{equation}\label{eq:assembled-op-topology}
        \cL(E_{T,\cM}^{2,\sigma},C_TH^{s-\lambda_i})
        \cap
        \cL(E_{T,\cM}^{2,\sigma},L_T^1B^{\sigma-\lambda_i}_{2,\infty}),
\end{equation}
for every \(\tau\in\cM\).  The convergence also holds in \(L^p(\Omega)\) for every finite \(p\ge2\).
\end{enumerate}
\end{theorem}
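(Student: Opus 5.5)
The proof follows the four assertions in order, with the centered estimate (ii) carrying the main weight.

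For (i) we expand the finite-cutoff block in Fourier variables. The coefficient reads
\[
\widehat{T^\tau_\Lambda w}(n,t)=\sum_{q,\ell,r}\chi_{\rm res}\,\chi_Q(q)\rho_N(\ell)\int_0^t K_i(t-s,q+\ell)\,\widehat w(q,s)\,\widehat\Psi_{j,\Lambda}(\ell,s)\,\widehat\Psi_{k,\Lambda}(r,t)\,\dd s ,
\]
subject to \(n=q+\ell+r\). Applying the Wick identity \eqref{eq:intro-wick} inside this finite sum gives \(D^\tau_\Lambda+B^\tau_\Lambda\). Fourier diagonality \eqref{eq:conv-cov} forces \(r=-\ell\) in the expectation term, hence \(n=q\), and recovers \eqref{eq:D-formulation}. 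The Wick product is a finite linear combination of second homogeneous chaoses that keeps the incidence \eqref{eq:external-incidence}. Identity \eqref{eq:ren-split} then follows by subtracting \(\Ctr^\tau_\Lambda\) and using \eqref{eq:resolution}.

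For (ii) we fix dyadic scales \((N,Q,M)\) and a finite time grid, and write \(B^\tau_{\Lambda,N,Q,M}(t,s)\) as a random matrix from \(\ell^2_q\) to \(\ell^2_n\). Its coefficients are second-chaos in the Gaussian legs \((\ell,s)\) and \((r,t)\). We apply Proposition~\ref{prop:tensor}, the iterated rectangular non-commutative Khintchine bound together with hypercontractivity. This bounds the \(L^p\) moment, up to \(\log\) factors, by the maximum of the four oriented flattenings
\[
\{\ell r q\}\to\{n\},\qquad \{\ell r n\}\to\{q\},\qquad \{\ell q\}\to\{r n\},\qquad \{r q\}\to\{\ell n\}.
\]
The incidence \(n=q+\ell+r\) together with \eqref{eq:shell-count} controls each Hilbert--Schmidt or operator norm by counting the free frequency. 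The coefficient size is \(N^{-\lambda_i-\alpha_j-\alpha_k}\), and the free leg is either \(\ell\) (giving \(N^{d/2}\)) or \(q\) or \(n\) (giving \(Q^{d/2}\) or \(M^{d/2}\)). This yields \(N^{d/2-\Gamma_\tau+\eps}(M^{d/2}+Q^{d/2})\).

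Continuity in \((t,s)\in\Delta_T\) comes from the same estimate applied to increments, using \eqref{eq:K-inc} and \eqref{eq:Psi-inc}, which cost \(N^{\chi_\tau\theta}\le N^{\eps/4}\). The Banach-valued Kolmogorov time-lift lemma then upgrades the grid bound to the \(C(\Delta_T)\) bound. We then sum the dyadic pieces. The input is \(\|P_Qw\|_{L^2}\lesssim Q^{-\sigma}\|w\|_{X^\sigma}\) (and, when \(\sigma>0\), the \(L^2\) norm covers the small-\(Q\) part). The output is weighted by \(M^{s-\lambda_i}\), or by \(M^{\sigma-\lambda_i}\) in \(L^1_T B\), with \(M\lesssim N\).

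The \(Q^{d/2}\) branch needs \(\sum_{Q\le N}Q^{d/2-\sigma}\) to combine with \(N^{d/2-\Gamma_\tau}\) into a negative power. This requires \(\sigma>d-\Gamma_\tau\) (or \(\sigma>0\)), and in the worst case \(Q\sim N\) it also gives \(s<\lambda_i+\Gamma_\tau-d\). The \(M^{d/2}\) branch gives the upper bound on \(\sigma\) and on \(s\). The strict \(10\eps\) margins of Definition~\ref{def:admissible} leave a factor \(N^{-c\eps}\). So the dyadic sum converges in \(L^p(\Omega;\mathcal L)\). Because each fixed block stabilizes in \(\Lambda\), the differences \(B_\Lambda-B_{\Lambda'}\) involve only \(N\gtrsim\min(\Lambda,\Lambda')\) and are bounded by \(\Lambda^{-c\eps}\) in \(L^p\).

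Almost sure convergence follows by Chebyshev and Borel--Cantelli over dyadic \(\Lambda\) and the finite family \(\cM\), working at \(T_0\). I expect the main obstacle to be making the flattening count exactly match the window \eqref{eq:centered-window}. In particular the \(\{\ell q\}\to\{rn\}\) flattening must not lose an extra \(Q^{d/2}\) or \(N^{d/2}\). To prevent this, I would use the incidence to eliminate \(r\) before counting, so that this flattening is at worst \(\min(Q,M)^{d/2}N^{d/2}\).

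For (iii) we apply \eqref{eq:retained-block-bound}, obtained for instance via Proposition~\ref{prop:symbol-criterion}. This gives \(N^{-\delta_\tau}\) times \(\|P_Qw\|+\|P_Q\partial_tw\|\). Since \(n=q\) and \(Q\lesssim N\), the output weight is at most \(Q^{s-\lambda_i}\). We bound \(\|P_Q\partial_t w\|_{L^2}\) by \(Q^{\lambda_i+\eps}\) times the \(C_TH^{-\lambda_i-\eps}\) norm, and sum over \(N\ge Q/c_{\rm ap}\) and \(Q\). This uses \(s<\delta_\tau-10\eps\) for the \(C_TH^{s-\lambda_i}\) bound. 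The Besov part is analogous, using the \(B^{\sigma-\lambda_i}\) component of \(\partial_tw\). Fixed-block kernel convergence plus this uniform tail gives convergence to \(\Ret^\tau\) with \eqref{eq:retained-main-bound}.

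Finally, (iv) combines (ii) and (iii) via \eqref{eq:ren-split}. The continuous embedding \(E^{2,\sigma}_{T,\cM}\hookrightarrow X^\sigma_T\) transfers the centered convergence to the assembled topology, and the retained part is deterministic, so both the almost sure and the \(L^p\) statements persist.
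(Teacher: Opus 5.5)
Your proposal is correct in substance and follows the paper's architecture: the finite Wick split with the contraction forced onto $n=q$, the four oriented flattenings via iterated rectangular Khintchine, incidence counts, the increment telescope and time lift, dyadic summation under the strict window, the retained-diagonal summation using $\|P_Q\partial_tw\|\le Q^{\lambda_i+\eps}\|\partial_tw\|_{C_TH^{-\lambda_i-\eps}}$, and assembly via $E^{2,\sigma}_{T,\cM}\hookrightarrow X^\sigma_T$.

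\emph{Where your route differs: almost-sure convergence.} You note that with sharp Galerkin cutoffs every block with $N\ll\min(\Lambda,\Lambda')$ is frozen. The strict margins then give a rate $\|B_\Lambda-B_{\Lambda'}\|_{L^p(\Omega;\cL)}\lesssim\min(\Lambda,\Lambda')^{-c\eps}$, and Borel--Cantelli along dyadic $\Lambda$ finishes. The paper instead attaches to each $(N,Q,M)$ a weighted random majorant over the bounded family $\mathscr C_{N,Q,M}$ of full and boundary-cutoff blocks. It shows these majorants have summable expectations and uses Fubini to get pathwise absolute summability, then runs Lemma~\ref{lem:dyadic-tail-assembly} pathwise. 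Your version is shorter and yields an explicit rate, but it relies on the cutoffs forming a sparse countable sequence (true here, since $\Lambda\in2^{\N_0}$). The paper's version gives no rate but produces, on one event, a pathwise absolutely convergent dyadic series whose limit is independent of the exhaustion. For the Besov target your rate holds once you dominate $\sup_M$ by the $\ell^1_M$ sum restricted to $N\gtrsim\Lambda$; all resulting exponents are negative under the window, so this works.

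\emph{Two points need tightening.}

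First, the legs $\wh\Psi_{j,\Lambda}(\ell,s)$ and $\wh\Psi_{k,\Lambda}(r,t)$ are not independent standard families. They are correlated when $j=k$ or $R_{jk}\ne0$, across the two times, and through $\wh\Psi(-\ell)=\overline{\wh\Psi(\ell)}$. The last correlation matters in the centered Wick-square sector $r=-\ell$. There, conditioning on one leg and applying Khintchine to the other is not legitimate, and hypercontractivity does not repair this. You need the content of Lemma~\ref{lem:local-normal-form}:
\begin{enumerate}[label=(\alph*)]
\item realify on involution classes;
\item whiten locally into independent standard vectors of bounded dimension, keeping the entry bound $N^{-\Gamma_\tau}$ and the signed incidence;
\item apply order-two Banach-valued decoupling before Proposition~\ref{prop:tensor}.
\end{enumerate}

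Second, the flattening counts must be operator-norm bounds via Schur's test on row and column degrees, not Hilbert--Schmidt norms. Every cut has the same Hilbert--Schmidt norm, which counting bounds only by $N^{-\Gamma_\tau}(NQM)^{d/2}$; this loses a factor $\min(Q,M)^{d/2}$. With Schur's test, the cut $(\ell,q)\to(r,n)$ is bounded by $N^{-\Gamma_\tau}(QM)^{d/2}$ (row degree $Q^d$, column degree $M^d$). So the loss you were concerned about for that cut does not arise.
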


\begin{remark}[Raw and fully subtracted choices]\label{rem:raw-wick}
If \(\Ctr^\tau_\Lambda=0\), the theorem gives the raw limit \(T^\tau=\Ret^\tau+B^\tau\), provided the deterministic contraction \(D^\tau_\Lambda=\Ret^\tau_\Lambda\) is admissible.  If \(\Ret^\tau_\Lambda=0\), then \(\Ctr^\tau_\Lambda=D^\tau_\Lambda\) and
\[
        T_{\Lambda}^{\tau,\mathfrak P}=T^\tau_\Lambda-D^\tau_\Lambda=B^\tau_\Lambda,
\]
so the theorem constructs the Wick-centered branch.  The same conclusion
holds in the one-color case, using the second-chaos decoupling estimate in
Lemma~\ref{lem:local-normal-form}.
\end{remark}

\begin{remark}[Equal-gain window]\label{rem:thresholds}
If all three gains in a branch equal \(\alpha\), then \(\Gamma_\tau=3\alpha\), and Definition~\ref{def:admissible} gives
\[
        s<4\alpha-d,
        \qquad
        \max\{0,d-3\alpha\}<\sigma<4\alpha-d
\]
with strict losses.  In the singular range \(3\alpha\le d\), the lower and upper \(\sigma\)-bounds overlap iff \(d-3\alpha<4\alpha-d\), equivalently \(\alpha>2d/7\).
\end{remark}

\section{Proof of the theorem}\label{sec:proof}

\subsection{Finite Wick split and deterministic diagonals}\label{sec:split-diag}

All Wick and tensor estimates below are first applied to finite Fourier and
time skeletons.  Lemma~\ref{lem:time-lift} then passes to continuous time.

\paragraph{Dyadic kernel formula.}
Fix a dyadic triple \((N,Q,M)\) with \(Q\le c_{\rm ap}N\).  The kernel of the block \(P_MT_{\Lambda}^{i;j,k}(P_Qw)\) is
\begin{equation}\label{eq:dyadic-kernel}
\begin{aligned}
        A_{\Lambda,N,Q,M}^{i;j,k}(n,q;t,s)
        :=&\ \chi_M(n)\chi_Q(q)
        \sum_{\ell+r=n-q}\rho_N(\ell)\rho_N(r)\chi_{\rm res}(q+\ell,r)\\
        &\times K_i(t-s,q+\ell)
        \wh\Psi_{j,\Lambda}(\ell,s)\wh\Psi_{k,\Lambda}(r,t).
\end{aligned}
\end{equation}
The operator acts by
\begin{equation}\label{eq:kernel-action}
        P_MT_{\Lambda}^{i;j,k}(P_Qw)(n,t)
        =\int_0^t\sum_q A_{\Lambda,N,Q,M}^{i;j,k}(n,q;t,s)\wh w(q,s)\dd s.
\end{equation}
The support conditions imply
\begin{equation}\label{eq:support-geometry}
        |q|\sim Q,
        \qquad
        |\ell|\sim |r|\sim N,
        \qquad
        |n|\sim M,
        \qquad
        M\lesssim N.
\end{equation}
The case \(M\ll N\) is permitted and corresponds to high-high cancellation in \(q+\ell+r=n\).

\paragraph{The finite Wick identity.}
For fixed colors, modes and times, set \(X=\wh\Psi_{j,\Lambda}(\ell,s)\) and \(Y=\wh\Psi_{k,\Lambda}(r,t)\).  The second-order Wick product is defined in the Gaussian Hilbert space by
\begin{equation}\label{eq:finite-wick}
        XY=\wick{XY}+
        \E[XY].
\end{equation}
For every finite list of modes and time variables this is the ordinary finite-dimensional Wick identity on the span of the corresponding kernels.
The Fourier-diagonal covariance gives
\begin{equation}\label{eq:finite-cov}
        \E\bigl[\wh\Psi_{j,\Lambda}(\ell,s)\wh\Psi_{k,\Lambda}(r,t)\bigr]
        =\one_{\ell+r=0}c_\Lambda(\ell)c_\Lambda(-\ell)R_{jk}(\ell)\sigma_{jk}(\ell;s,t).
\end{equation}
Inserting \eqref{eq:finite-wick} into \eqref{eq:dyadic-kernel} yields
\begin{equation}\label{eq:dyadic-split}
        A_{\Lambda,N,Q,M}^{i;j,k}=B_{\Lambda,N,Q,M}^{i;j,k}+
        \one_{n=q}\chi_M(q)D_{\Lambda,N,Q}^{i;j,k}(q;t,s),
\end{equation}
where \(D=0\) if the covariance channel is absent, and otherwise
\begin{equation}\label{eq:D-kernel}
\begin{aligned}
        D_{\Lambda,N,Q}^{i;j,k}(q;t,s)
        :=&\ \chi_Q(q)
        \sum_\ell \rho_N(\ell)\rho_N(-\ell)
        \chi_{\rm res}(q+\ell,-\ell)\\
        &\times K_i(t-s,q+\ell)c_\Lambda(\ell)c_\Lambda(-\ell)
        R_{jk}(\ell)\sigma_{jk}(\ell;s,t).
\end{aligned}
\end{equation}
The relation \(\ell+r=0\) forces \(n=q\).  The factor \(\chi_M(q)\) in \eqref{eq:dyadic-split} is the output projection inherited from \(P_M\); equivalently, the full diagonal operator is \(D_{\Lambda,N,Q}^{i;j,k}\) and its dyadic output block is \(P_MD_{\Lambda,N,Q}^{i;j,k}P_Q\).  Hence the lower-chaos branch is a scalar Fourier multiplier in the low input variable.  The centered part \(B\) is still supported by \(n=q+\ell+r\), including the centered Wick-square sector with \(r=-\ell\).

\begin{lemma}[Finite Wick split]\label{lem:color-split}
For every cutoff, dyadic triple and mixed label \((i;j,k)\), \eqref{eq:dyadic-split} is an exact algebraic identity.  The branch \(D\) is deterministic and Fourier diagonal in \((q,n)\); the branch \(B\) is a finite sum of second homogeneous centered Gaussian chaoses.
\end{lemma}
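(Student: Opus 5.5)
The plan is to verify the identity termwise. Everything is finite: the Galerkin cutoff \(c_\Lambda\) and the dyadic cutoffs \(\chi_M,\chi_Q,\rho_N\) restrict \(n,q,\ell,r\) to finite sets. So \eqref{eq:dyadic-kernel} is, for each fixed \((n,q;t,s)\), a finite sum of products \(XY\) with \(X=\wh\Psi_{j,\Lambda}(\ell,s)\) and \(Y=\wh\Psi_{k,\Lambda}(r,t)\). The deterministic coefficients \(\chi_M(n)\chi_Q(q)\rho_N(\ell)\rho_N(r)\chi_{\rm res}(q+\ell,r)K_i(t-s,q+\ell)\) can be pulled out of every operation.

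First, apply \eqref{eq:finite-wick} to each product. Both \(X=\isoW(c_\Lambda(\ell)h_{j,\ell,s})\) and \(Y\) lie in the complexified first chaos. In the finite-dimensional span of the finitely many kernels involved, \(\wick{XY}:=XY-\E[XY]\) is the orthogonal projection of \(XY\) onto the second homogeneous chaos; this uses the standard product formula \(\isoW(h)\isoW(g)=I_2(h\otimes_s g)+\la h,g\ra\), extended complex-bilinearly. Summing with the deterministic coefficients, define \(B_{\Lambda,N,Q,M}\) as the same finite sum as in \eqref{eq:dyadic-kernel} with \(XY\) replaced by \(\wick{XY}\). It is a finite linear combination of elements of the second chaos, hence centered and in the second homogeneous chaos. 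It keeps the constraint \(\ell+r=n-q\), which is the incidence \(n=q+\ell+r\).

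Second, compute the remaining expectation term using \eqref{eq:finite-cov}. The indicator \(\one_{\ell+r=0}\) together with \(\ell+r=n-q\) gives \(n=q\), and substituting \(r=-\ell\) collapses the double sum to a single sum over \(\ell\). The factor \(\chi_M(n)\) becomes \(\chi_M(q)\), and the rest reproduces exactly \eqref{eq:D-kernel}, including \(\rho_N(-\ell)\), \(\chi_{\rm res}(q+\ell,-\ell)\), the factors \(c_\Lambda(\pm\ell)\), and \(R_{jk}\sigma_{jk}(\ell;s,t)\). If \(R_{jk}\equiv0\), this term vanishes and \(D=0\). The result is deterministic and carries \(\one_{n=q}\), so it is Fourier diagonal in \((q,n)\). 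Adding the two contributions gives \eqref{eq:dyadic-split} identically in \((n,q;t,s)\).

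The only point needing care is the complex/real convention. The variables \(\wh\Psi(\ell)\) and \(\wh\Psi(-\ell)\) are not independent, and the Wick product must be taken with respect to the non-conjugated complex-bilinear covariance \eqref{eq:conv-cov}. I would handle this by noting that \(\isoW\) is extended complex-linearly, so \(I_2\) and the product formula extend bilinearly as well. The identity \(XY=\wick{XY}+\E[XY]\) then holds for complex combinations by bilinearity from the real case. This includes the Wick-square sector \(r=-\ell\), which remains in \(B\) as a centered term. No estimate is needed; the lemma is purely algebraic.
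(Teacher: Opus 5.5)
Your proposal is correct and follows essentially the same route as the paper: apply \(XY=\wick{XY}+\E[XY]\) termwise in the finite Gaussian vector, then use the Fourier-diagonal covariance, whose constraint \(\ell+r=0\) forces \(n=q\), to obtain \(D\), leaving a centered second-chaos remainder with incidence \(n=q+\ell+r\). The only difference is that the paper adds a remark passing to continuous time via rational time skeletons and \(L^2(\Omega)\)-continuity; your fixed-\((t,s)\) argument, with \(\wick{XY}\) defined as \(XY-\E[XY]\), does not need that step.
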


\begin{proof}
Fix the dyadic block and the two times \((s,t)\).  The family of Fourier
coefficients appearing in \eqref{eq:dyadic-kernel} is then a finite Gaussian
vector in the isonormal space \(\mathfrak H\).  For each pair \(X,Y\) in this
finite vector, the degree-two Wick projection is
\(\wick{XY}:=XY-\E[XY]\); this is the elementary finite-vector form of
Gaussian Hilbert space and Wiener chaos calculus
\cite{Janson,Nualart,PeccatiTaqqu}.  Formula \eqref{eq:finite-cov} follows
from Fourier diagonality of \eqref{eq:conv-cov}.  Substituting this identity
into \eqref{eq:dyadic-kernel} gives \eqref{eq:dyadic-split}.  The indicator
\(\ell+r=0\) turns the external incidence \(n=q+\ell+r\) into \(n=q\),
proving the diagonal statement.  After subtracting the scalar expectation,
the remaining monomials are centered degree-two Gaussian chaoses.  The
continuous-time identity follows by applying this finite-time argument on
rational time skeletons and using the coefficient continuity in
\(L^2(\Omega)\).
\end{proof}

\paragraph{Finite assembly.}
Let \(\mathfrak P\) be a deterministic diagonal resolution, so that
\(D^\tau_\Lambda=\Ret^\tau_\Lambda+\Ctr^\tau_\Lambda\).  The Wick split
immediately gives, block by block,
\begin{equation}\label{eq:finite-ren-equals-B}
        P_MT_{\Lambda}^{\tau,\mathfrak P}P_Q
        =P_M\Ret^\tau_\Lambda P_Q+P_MB^\tau_\Lambda P_Q .
\end{equation}
If \(\Ret^\tau_\Lambda=0\), then
\(T_{\Lambda}^{\tau,\mathfrak P}=B^\tau_\Lambda\) is a centered
second-chaos operator.

\paragraph{Diagonal Volterra estimate.}
We prove the deterministic Volterra criterion for retained diagonals.

\begin{proposition}[Volterra criterion for an admissible diagonal]\label{prop:diagonal}
Assume Assumption~\ref{ass:diag} for \(\tau=(i;j,k)\), and let \(s<\Delta_\tau-10\eps\).  Then the choice \(\Ret^\tau_\Lambda=D^\tau_\Lambda\), \(\Ctr^\tau_\Lambda=0\) is a dyadically admissible deterministic diagonal resolution for this label with any retained gain \(\delta_\tau<\Delta_\tau\).  The cutoff diagonals converge to a deterministic operator \(D^\tau\), and \eqref{eq:retained-main-bound} holds with \(\Ret^\tau=D^\tau\).
\end{proposition}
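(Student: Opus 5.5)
The plan is to reduce everything to Proposition~\ref{prop:symbol-criterion}, applied to the raw kernel \(D^\tau_{\Lambda,N,Q}\) of \eqref{eq:D-kernel}, and then sum the dyadic pieces.

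\textbf{Step 1: split the raw kernel.} Fix \((N,Q)\) with \(Q\le c_{\rm ap}N\). Inside \eqref{eq:D-kernel} I insert the decomposition of Assumption~\ref{ass:diag} for each summand \eqref{eq:diag-summand}. The cutoff factors \(\chi_Q(q)\rho_N(\ell)\rho_N(-\ell)\chi_{\rm res}c_\Lambda c_\Lambda\) are bounded, time independent and smooth, so I absorb them into the amplitudes \(a_\nu\) and into the remainder \(e_{N,Q}\). This leaves three kinds of component.

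\begin{enumerate}[label=(\alph*),leftmargin=2.2em]
\item The remainder is an absolute kernel. By \eqref{eq:remainder-loop}, \(\sup_q|r(q;t,s)|\le C_\Phi N^{-\Delta_\tau}\). Integrating over \(s\in[0,t]\) with \(t\le T_0\le1\) gives \eqref{eq:absolute-symbol-criterion} with \(\delta=\Delta_\tau\).
\item An oscillatory branch with \(\vartheta_\nu=0\) is also treated as an absolute kernel, with no phase denominator. From \eqref{eq:amp-bound}, the shell count \eqref{eq:shell-count} and \(R_{jk}\) already inside \(a_\nu\), the \(\ell\)-sum is bounded by \(N^{d}N^{-\Gamma_\tau-\kappa_{jk}}\le N^{-\Delta_\tau}\), using \eqref{eq:osc-gain}.
\item An oscillatory branch with \(\vartheta_\nu>0\) is an oscillatory loop kernel. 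By \eqref{eq:phase-lower} and \eqref{eq:amp-bound}, the quantity in \eqref{eq:osc-symbol-criterion} is at most \(N^{d}\cdot N^{-\Gamma_\tau-\kappa_{jk}}N^{-\vartheta_\nu}\le CN^{-\Delta_\tau}\), again by \eqref{eq:osc-gain}.
\end{enumerate}

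Proposition~\ref{prop:symbol-criterion} then yields \eqref{eq:retained-block-bound} with gain \(\Delta_\tau\), hence with any \(\delta_\tau<\Delta_\tau\), uniformly in \(\Lambda\).

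\textbf{Step 2: fixed-block convergence.} For fixed \((N,Q)\) the \(\ell\)-sum is finite. Once \(\Lambda\gtrsim N\), all \(c_\Lambda\) factors equal \(1\), so the kernel is eventually constant in \(\Lambda\). Convergence in \(C_{t,s}\mathcal L(\ell^2_q,\ell^2_q)\) then only needs continuity of \(K_i\) and \(\sigma_{jk}\) in time, which \eqref{eq:K-inc} and \eqref{eq:Psi-inc} provide (the covariance increment is controlled by Cauchy--Schwarz). This completes admissibility of the resolution \(\Ret^\tau_\Lambda=D^\tau_\Lambda\), \(\Ctr^\tau_\Lambda=0\).

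\textbf{Step 3: summation and the main obstacle.} The hard part is deriving \eqref{eq:retained-main-bound} from the dyadic bounds, so I write \(D^\tau w=\sum_{N}\sum_{Q\le c_{\rm ap}N}P_QD^\tau_{N,Q}P_Qw\). Since \(n=q\), output blocks are diagonal and output frequency is \(\sim Q\).

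\begin{itemize}
\item \emph{\(C_TH^{s-\lambda_i}\) bound.} For each \(Q\) I sum over \(N\gtrsim Q\) the bound \(N^{-\delta_\tau}(\|P_Qw\|_{L^\infty_T L^2}+\|P_Q\partial_tw\|_{L^\infty_T L^2})\). The derivative term carries weight \(Q^{\lambda_i+\eps}\) relative to the \(C_TH^{-\lambda_i-\eps}\) norm, so I get
\[
\|P_QD^\tau w\|_{C_TL^2}\lesssim Q^{-\delta_\tau}\bigl(\|P_Qw\|_{L^\infty_TL^2}+Q^{\lambda_i+\eps}\|P_Q\partial_tw\|_{H^{-\lambda_i-\eps}}\bigr).
\]
Multiplying by \(Q^{s-\lambda_i}\) and square-summing in \(Q\), the first term is fine because \(s-\lambda_i-\delta_\tau<0\). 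The second term carries \(Q^{s-\delta_\tau+\eps}\), which is square summable precisely because \(s<\delta_\tau-10\eps\).
\item \emph{\(L^1_TB^{\sigma-\lambda_i}_{2,\infty}\) bound.} Here I instead use \(\|P_Q\partial_tw\|_{L^\infty_TL^2}\lesssim Q^{\lambda_i-\sigma}\|\partial_tw\|_{L^\infty B^{\sigma-\lambda_i}}\) and \(\|P_Qw\|\lesssim Q^{-\sigma}\). This gives \(Q^{\sigma-\lambda_i}\|P_QD^\tau w\|\lesssim Q^{-\delta_\tau}\|w\|_E\), which is uniform in \(Q\) since \(\delta_\tau>0\) and \(\lambda_i>0\).
\end{itemize}

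Convergence of the cutoff diagonals in operator norm then follows from fixed-block convergence plus the uniform tails \(N^{-\delta_\tau+\text{margin}}\) by dominated summation, giving the limit \(D^\tau\).
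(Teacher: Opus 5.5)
Your proposal is correct and follows essentially the paper's argument: the same split from Assumption~\ref{ass:diag} into the remainder, absolute $\vartheta_\nu=0$ branches and integrated-by-parts $\vartheta_\nu>0$ branches (you package the integration by parts through Proposition~\ref{prop:symbol-criterion}, whose proof is exactly that computation), the resulting $N^{-\Delta_\tau}$ dyadic block bound, summation into $C_TH^{s-\lambda_i}$ and $L_T^1B^{\sigma-\lambda_i}_{2,\infty}$, and fixed-block plus summable-tail convergence; for the Sobolev target you sum $N\gtrsim Q$ first and then use orthogonality of the diagonal outputs in $Q$, a harmless reordering of the paper's Cauchy--Schwarz in $Q$ followed by the $N$-sum. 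The only slip is in Step~3, where you should use the gain $\Delta_\tau$ that Step~1 actually delivers, since the hypothesis is $s<\Delta_\tau-10\eps$ and not $s<\delta_\tau-10\eps$ for an arbitrary $\delta_\tau<\Delta_\tau$.
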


\begin{proof}
We suppress the mixed label and write \(\Delta=\Delta_\tau\), \(\lambda=\lambda_i\), and \(\Gamma=\Gamma_\tau\).

For the absolutely summable remainder in Assumption~\ref{ass:diag}, the diagonal kernel is already bounded after the high loop:
\begin{equation}\label{eq:D-rem-mult}
        \sup_{q,t,s}\left|\sum_{|\ell|\sim N}e_{N,Q}(q,\ell;s,t)\right|
        \lesssim N^{-\Delta}.
\end{equation}
It therefore acts on the input mode \(q\) by a scalar multiplier of size \(N^{-\Delta}\).  We next treat one oscillatory summand; the number of such summands is fixed independently of the dyadic scales.  For fixed \(q,\ell,t\), set \(a_s=a(q,\ell;t,s)\) and \(\Phi=\Phi(q,\ell)\).  The contribution to the Volterra operator is
\begin{equation}\label{eq:D-osc-one}
        \mathcal V_{q,\ell}(t)
        =\int_0^t e^{\ii(t-s)\Phi}a_s\widehat w(q,s)\dd s .
\end{equation}
If the branch has \(\vartheta>0\), integration by parts in \(s\) gives the identity
\begin{equation}\label{eq:D-IBP-full}
\begin{aligned}
        \mathcal V_{q,\ell}(t)
        =&\ \frac{e^{\ii t\Phi}a(q,\ell;t,0)\widehat w(q,0)
        -a(q,\ell;t,t)\widehat w(q,t)}{\ii\Phi} \\
        &+\int_0^t\frac{e^{\ii(t-s)\Phi}}{\ii\Phi}
        \bigl(\partial_sa(q,\ell;t,s)\widehat w(q,s)
        +a(q,\ell;t,s)\partial_s\widehat w(q,s)\bigr)\dd s .
\end{aligned}
\end{equation}
Here \(t\) is treated as an external parameter; no derivative in \(t\) is used.  By \eqref{eq:amp-bound} and \eqref{eq:phase-lower}, the summand is bounded by
\begin{equation}\label{eq:D-one-summand}
        N^{-\Gamma-\kappa_{jk}-\vartheta}
        \Bigl(\|P_Qw\|_{L_T^\infty L^2}
        +\|P_Q\partial_tw\|_{L_T^\infty L^2}\Bigr)
\end{equation}
at the level of the scalar multiplier acting on the \(q\)-shell.  Summing the high loop, which has \(O(N^d)\) lattice points, gives
\begin{equation}\label{eq:D-shell-size-refined}
        N^dN^{-\Gamma-\kappa_{jk}-\vartheta}
        \lesssim N^{-\Delta},
\end{equation}
by \eqref{eq:osc-gain}.  If \(\vartheta=0\), no integration by parts is used.  Instead,
\[
        \sum_{|\ell|\sim N}|a(q,\ell;t,s)|
        \lesssim N^{d-\Gamma-\kappa_{jk}}
        \le N^{-\Delta}
\]
directly from \eqref{eq:amp-bound} and \eqref{eq:osc-gain}.  Consequently,
for every dyadic diagonal block,
\begin{equation}\label{eq:D-dyadic-L2-refined}
        \|D_{N,Q}P_Qw\|_{C_TL^2}
        \lesssim
        N^{-\Delta}\|P_Qw\|_{L_T^\infty L^2}
        +N^{-\Delta}\|P_Q\partial_tw\|_{L_T^\infty L^2},
\end{equation}
with the derivative term omitted for purely absolute branches.

We now sum this estimate.  Because the diagonal branch is supported by \(n=q\), the output frequency is \(Q\), not \(N\).  For the Sobolev target,
\begin{equation}\label{eq:D-H-weighted}
\begin{aligned}
        \|D_{N,Q}P_Qw\|_{C_TH^{s-\lambda}}
        &\lesssim Q^{s-\lambda}N^{-\Delta}\|P_Qw\|_{L_T^\infty L^2} \\
        &\quad +Q^{s-\lambda}N^{-\Delta}\|P_Q\partial_tw\|_{L_T^\infty L^2}.
\end{aligned}
\end{equation}
By Cauchy--Schwarz in the dyadic \(Q\)-sum, with the borderline logarithm
absorbed into \(N^\eps\),
\begin{equation}\label{eq:D-first-sum}
        \sum_{Q\le c_{\rm ap}N}Q^{s-\lambda}\|P_Qw\|_{L^2}
        \lesssim_\eps N^{(s-\lambda)_++\eps}\|w\|_{L^2}.
\end{equation}
The corresponding high-shell factor is \(N^{-\Delta+(s-\lambda)_++\eps}\), which is summable under \(s<\Delta-10\eps\) because either \(s\le\lambda\), in which case \((s-\lambda)_+=0\), or \(s>\lambda\), in which case \((s-\lambda)_+<s<\Delta\).  The derivative term uses
\begin{equation}\label{eq:D-dt-weighted}
        \|P_Q\partial_tw\|_{L_T^\infty L^2}
        \le Q^{\lambda+\eps}\|\partial_tw\|_{C_TH^{-\lambda-\eps}},
\end{equation}
so that
\begin{equation}\label{eq:D-der-sum}
        \sum_{Q\le c_{\rm ap}N}Q^{s-\lambda}N^{-\Delta}
        \|P_Q\partial_tw\|_{L_T^\infty L^2}
        \lesssim N^{-\Delta+(s)_++2\eps}
        \|\partial_tw\|_{C_TH^{-\lambda-\eps}}.
\end{equation}
The high-shell sum is finite because \(s<\Delta-10\eps\); if \(s<0\), the positive part is absent.  This proves the \(C_TH^{s-\lambda}\) diagonal bound.

For the Besov target, fix the output/input block \(Q\).  From \eqref{eq:D-dyadic-L2-refined},
\begin{equation}\label{eq:D-Besov-refined}
\begin{aligned}
        Q^{\sigma-\lambda}\|D_{N,Q}P_Qw\|_{L_T^\infty L^2}
        &\lesssim N^{-\Delta}Q^{-\lambda}
        \|w\|_{L_T^\infty B^\sigma_{2,\infty}} \\
        &\quad +N^{-\Delta}
        \|\partial_tw\|_{L_T^\infty B^{\sigma-\lambda}_{2,\infty}}.
\end{aligned}
\end{equation}
The sum over \(N\gtrsim Q\) is finite because \(\Delta>0\), uniformly in \(Q\).  Hence
\[
        \|D w\|_{L_T^\infty B^{\sigma-\lambda}_{2,\infty}}
        \lesssim \|w\|_{E_{T,\cM}^{2,\sigma}}.
\]
The \(L_T^1\) estimate follows immediately with the factor \(T\).

For the cutoff limit, fix a dyadic pair \((N,Q)\).  The diagonal matrix is finite and its entries converge because the cutoff symbols converge pointwise.  The estimates above give a summable majorant independent of the cutoff.  Lemma~\ref{lem:dyadic-tail-assembly} therefore proves convergence in the same operator norms.  This proves the proposition.
\end{proof}

\subsection{Centered second-chaos estimate}\label{sec:centered}

\paragraph{Local realification and Wick-square decoupling.}\label{subsec:local-whitening}
Choose once and for all a half-lattice \(\Z^d_+\subset\Z^d\) containing one
representative of every involution class \([m]=\{m,-m\}\), with \(0\in
\Z^d_+\).  For \(m\ne0\), write
\[
        m=\epsilon(m)\underline m,
        \qquad \underline m\in\Z^d_+,
        \qquad \epsilon(m)\in\{+1,-1\},
\]
and set \(\underline{0}=0\), \(\epsilon(0)=1\).  The normalization below is
local to one fixed-time evaluation or one increment comparison; an entire
time grid is never whitened at once.  Hence only a uniformly bounded number
of first-Wiener kernels occurs in each involution class, independently of
the rational grid used in the time lift.

\begin{lemma}[Local normalization and decoupled normal form]\label{lem:local-normal-form}
Fix a cutoff and a dyadic block \((N,Q,M)\).  Consider either one value of the
centered kernel, one increment between two Volterra pairs, or one member of
the finite cutoff family used in the tail argument.  In an increment
estimate, collect jointly all first-Wiener kernels at the two endpoints and
in the finite telescope connecting them.  There exists an integer \(d_0\),
depending only on the finite color family and on the number of telescope
terms, with the following properties.

For every \(\underline m\in\Z^d_+\), there is a standard real Gaussian vector
\[
        G_{\underline m}
        =(G_{\underline m,\mu})_{1\le\mu\le d_0},
\]
and these vectors are independent for distinct representatives.  With
\[
        a=(\underline\ell,\mu),\qquad
        b=(\underline r,\lambda),\qquad c=q,\qquad e=n,
\]
the centered kernel has a finite expansion
\begin{equation}\label{eq:centered-normal-form}
        B_{\Lambda,N,Q,M}^{i;j,k}(e,c;t,s)
        =\sum_{\nu}\sum_{a,b}
        H_{\nu;a,b,c,e}(t,s)
        \wick{G_{\underline\ell,\mu}G_{\underline r,\lambda}} .
\end{equation}
The pattern index \(\nu\) contains the two Fourier signs, together with the
finite sine--cosine, color and cutoff choices.  For fixed \(\nu\), there are
signs \(\epsilon_{\nu,1},\epsilon_{\nu,2}\in\{+1,-1\}\) such that the
coefficient tensor is supported by
\begin{equation}\label{eq:H-support}
        n=q+\epsilon_{\nu,1}\underline\ell
              +\epsilon_{\nu,2}\underline r,
        \qquad |\underline\ell|\sim|\underline r|\sim N,
        \qquad |q|\sim Q,
        \qquad |n|\sim M,
        \qquad Q\le c_{\rm ap}N,
\end{equation}
up to the fixed Littlewood--Paley overlap, and
\begin{equation}\label{eq:H-size}
        |H_{\nu;a,b,c,e}(t,s)|
        \lesssim N^{-\Gamma_{i;j,k}}.
\end{equation}
For an increment tensor, the right-hand side carries the additional
increment factor from Assumption~\ref{ass:coeff}.  The number of patterns
and the auxiliary dimension \(d_0\) are uniform in the dyadic scales,
cutoff and time points.

For every finite-dimensional Banach space \(X\), every finite coefficient
family \((C_{a,b})\subset X\), and every \(p\ge2\), write
\(G_a=G_{\underline\ell,\mu}\) for \(a=(\underline\ell,\mu)\),
and similarly for \(G_b\).  Then
\begin{equation}\label{eq:local-decoupling}
        \left\|\sum_{a,b}C_{a,b}\wick{G_aG_b}
        \right\|_{L^p(\Omega;X)}
        \le C_{\rm dec}
        \left\|\sum_{a,b}C_{a,b}G_a\widetilde G_b
        \right\|_{L^p(\Omega\times\widetilde\Omega;X)},
\end{equation}
where \((\widetilde G_b)\) is an independent copy.  The constant
\(C_{\rm dec}\) depends only on the chaos order.
\end{lemma}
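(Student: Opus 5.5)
The plan has three parts: realify the finitely many Gaussian coefficients, expand the centered kernel in products of independent standard Gaussians, and then invoke the standard decoupling inequality for second chaos.

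\emph{Local realification.} We fix the finite family of first-Wiener kernels in play. This is one fixed-time evaluation, or the endpoints together with the telescope of an increment, or one cutoff member. For each involution class \(\underline m\) we collect the real and imaginary parts of \(\wh\Psi_a(\pm\underline m,t)\) over the finitely many colors and times involved. By \eqref{eq:real-convention} the \(-\underline m\) coefficient is the conjugate of the \(+\underline m\) one, so only the real sine--cosine coordinates at \(\underline m\) remain. Their number is at most \(2|\cC|\) times the number of time points, and this gives \(d_0\).

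Fourier diagonality \eqref{eq:conv-cov} makes the vectors attached to distinct classes uncorrelated, hence independent, since they are jointly Gaussian. Within one class we factor the positive semidefinite covariance matrix as \(\Sigma_{\underline m}=L_{\underline m}L_{\underline m}^{\top}\), for instance by a symmetric square root, padded with zero columns up to \(d_0\). This writes the coordinates as \(L_{\underline m}G_{\underline m}\) with \(G_{\underline m}\) standard. Each entry of \(L_{\underline m}\) is bounded by \(\|\Sigma_{\underline m}\|^{1/2}\lesssim N^{-\alpha_a}\) by \eqref{eq:Psi-size}. The color weights need care: we first normalise each color block by \(N^{\alpha_a}\) and then take the square root, so that the entry bounds are \(\lesssim N^{-\alpha_a}\) row by row. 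For increment kernels the same reasoning uses \eqref{eq:Psi-inc}.

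\emph{Normal form.} We substitute these representations into the centered part of \eqref{eq:dyadic-kernel}. Wick products are bilinear in first-chaos elements, so \(\wick{XY}\) becomes a linear combination of the \(\wick{G_{\underline\ell,\mu}G_{\underline r,\lambda}}\). We split according to the signs \(\epsilon(\ell)\) and \(\epsilon(r)\), the real and imaginary component, the color and the cutoff choice. This produces finitely many patterns \(\nu\), and the incidence \(n=q+\ell+r\) becomes \eqref{eq:H-support}. The coefficient \(H\) is a single product \(K_i\cdot L\cdot L\) with no remaining sum, because \(\ell\) and \(r\) are determined by \((\underline\ell,\epsilon)\) and \((\underline r,\epsilon)\). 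Using \eqref{eq:K-size} it is bounded by \(N^{-\lambda_i-\alpha_j-\alpha_k}\), which is \eqref{eq:H-size}. The increment factors enter through \eqref{eq:K-inc} and \eqref{eq:Psi-inc} by telescoping. All counts are uniform in the scales because the number of times and colors is fixed.

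\emph{Decoupling.} Inequality \eqref{eq:local-decoupling} is the de la Pe\~na--Montgomery-Smith type decoupling for homogeneous Gaussian chaos of order two, valid for Banach-valued coefficients with a constant depending only on the order. We give a direct route. Symmetrise so that \(C_{a,b}=C_{b,a}\). Then
\[
\textstyle\sum_{a,b} C_{a,b}\wick{G_aG_b}=\E'\Bigl[\sum_{a,b} C_{a,b}\,\tfrac{G_a+G'_a}{\sqrt2}\,\tfrac{G_b-G'_b}{\sqrt2}\ \Big|\ G\Bigr]\cdot 2
\]
up to diagonal corrections that are themselves centered. Here \(U=(G+G')/\sqrt2\) and \(V=(G-G')/\sqrt2\) are independent standard Gaussian vectors. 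Jensen's inequality for conditional expectation in \(L^p(X)\) then gives the bound by \(\|\sum C_{a,b}U_aV_b\|_{L^p}\). The diagonal terms \(a=b\) require checking. Writing \(\wick{G_a^2}=G_a^2-1\), the conditional expectation identity \(\E'[(G_a+G'_a)(G_a-G'_a)\mid G]=G_a^2-1\) handles them exactly, so no correction is needed.

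I expect the main obstacle to be the realification step. The weights must be kept so that \(H\) obeys the product bound \eqref{eq:H-size} uniformly across colors with different \(\alpha_a\), and possibly degenerate correlations, while the dimension \(d_0\) stays independent of the time grid. The decoupling step is classical.
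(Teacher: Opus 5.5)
Your proposal is correct. For the realification and normal form it follows the paper's proof closely. The paper likewise works with the class-wise span of the sine--cosine kernels occurring in one value or one increment comparison, and gets independence across classes from Fourier diagonality. It also factors out \(N^{-\alpha_a}\) before factoring the normalized covariance as \(\Sigma_{\underline m}=U_{\underline m}U_{\underline m}^*\) with zero padding to \(d_0\). It records the signs in \(\nu\) and reads off \eqref{eq:H-support} and \eqref{eq:H-size} from bilinearity of the Wick product. (Your per-color normalization is actually automatic. For any factorization \(\Sigma=LL^\top\), the row attached to a coordinate \(X_\rho\) has Euclidean norm \((\E X_\rho^2)^{1/2}\). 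Since both endpoints live in one factorization, the row of an increment has norm \((\E|X_t-X_{t'}|^2)^{1/2}\), which gives the increment factor directly.)

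The genuine difference is the decoupling inequality \eqref{eq:local-decoupling}. The paper symmetrizes the coefficients and cites the Banach-valued decoupling theorem for order-two Gaussian chaos from de la Pe\~na--Gin\'e. You prove it instead by Gaussian rotation. With \(U=(G+G')/\sqrt2\) and \(V=(G-G')/\sqrt2\) one has \(\E'[(G_a+G'_a)(G_b-G'_b)\mid G]=G_aG_b-\delta_{ab}\). This gives the exact identity \(\sum_{a,b}C_{a,b}\wick{G_aG_b}=2\,\E'\bigl[\sum_{a,b}C_{a,b}U_aV_b\mid G\bigr]\) for arbitrary, not necessarily symmetric, coefficients. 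So neither your symmetrization nor your hedge about ``diagonal corrections'' is needed. Conditional Jensen in \(L^p(\Omega;X)\) together with \((U,V)\overset{d}{=}(G,\widetilde G)\) then yields \eqref{eq:local-decoupling} with \(C_{\rm dec}=2\).

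Your route is self-contained, valid in any Banach space, and explicit in the constant. It does rely on rotation invariance and is tailored to order two. The cited theorem covers chaoses of every order and more general independent sequences, which this paper never needs.
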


\begin{proof}
For \(\underline m\in\Z^d_+\), let \(\mathfrak H_{\underline m}\) be the real
closed subspace generated by the sine--cosine representatives of all
kernels with signed labels \(\pm\underline m\) that occur in the selected
value or increment comparison.  If \(\underline m\ne\underline m'\),
Fourier diagonality gives zero covariance between every generator of
\(\mathfrak H_{\underline m}\) and every generator of
\(\mathfrak H_{\underline m'}\).  The two subspaces are therefore
orthogonal.  Since the isonormal process is Gaussian, its restrictions to
these subspaces are independent.

Factor out the dyadic size before performing any finite-dimensional linear
algebra.  On \(|m|\sim N\), write
\[
        h_{a,m,t}=N^{-\alpha_a}\bar h_{a,m,t},
        \qquad \|\bar h_{a,m,t}\|_{\mathfrak H}\lesssim1,
\]
and normalize increments with the additional factor
\(|t-t'|^\theta N^{\alpha_a\theta}\) from \eqref{eq:Psi-inc}.  In each
class, the collected normalized kernels form a family of cardinality at
most \(d_0\).  Its covariance matrix \(\Sigma_{\underline m}\) satisfies
\[
        \|\Sigma_{\underline m}\|_{\op}
        \le\operatorname{tr}\Sigma_{\underline m}\lesssim d_0.
\]
Choose a factorization
\(\Sigma_{\underline m}=U_{\underline m}U_{\underline m}^*\) on its range
and pad by zero coordinates to dimension \(d_0\).  Every collected
first-Wiener variable then has the form
\[
        \isoW(\bar h_\rho)
        =\sum_{\mu=1}^{d_0}
          U_{\underline m}(\rho,\mu)G_{\underline m,\mu},
        \qquad \|U_{\underline m}\|_{\op}\lesssim d_0^{1/2}.
\]
For an increment comparison, the same factorization contains both
endpoints, so subtraction is performed in a common Gaussian coordinate
system.

The signed Fourier labels are not used as Gaussian indices.  Instead, their
signs are recorded in \(\nu\), while \(\underline\ell\) and
\(\underline r\) index the standard Gaussian coordinate family attached
to the involution classes.  Distinct representatives give independent
vectors; coincident representatives are handled by the Wick product and the
order-two decoupling.  Expanding the synthesis maps therefore gives
\eqref{eq:centered-normal-form}; the physical convolution relation becomes
\eqref{eq:H-support}.  The Duhamel multiplier contributes
\(N^{-\lambda_i}\), and the two first-Wiener kernels contribute
\(N^{-\alpha_j}\) and \(N^{-\alpha_k}\), proving
\eqref{eq:H-size}.  The finite telescope for a time difference places
exactly one increment in each summand and gives the stated increment bound.

Finally, a centered quadratic form in the common coordinates is
\[
        \sum_{a,b}C_{a,b}
        \bigl(G_aG_b-\E[G_aG_b]\bigr).
\]
Its coefficient array may be symmetrized, and the Banach-valued decoupling
theorem for homogeneous Gaussian chaoses of order two gives
\eqref{eq:local-decoupling}; see \cite{DeLaPenaGine}.  The scalar covariance
has already been assigned to the deterministic diagonal by the finite Wick
split.
\end{proof}

\begin{remark}[Patternwise tensor estimate]\label{rem:patternwise-tensor}
No independence between different values of \(\nu\) is needed.  For each
fixed sign and realification pattern, apply \eqref{eq:local-decoupling} and
Proposition~\ref{prop:tensor}, and then sum the finitely many bounds.  Since
changing the two signs is a bijection on the relevant lattice shells, the
incidence counts below are uniform in \(\nu\).
\end{remark}

\paragraph{A four-legged second-chaos tensor estimate.}
The next proposition is the second-order finite random tensor estimate in
the form needed below; compare \cite{DNYTensor,Kaneshiro}.  Its four
flattenings correspond to the four choices of placing each of the two
Gaussian legs on the input or output side.

\begin{proposition}[Finite second-order tensor estimate]\label{prop:tensor}
Let \(A,B,C,E\) be finite sets and set
\[
        \Lfin=2+|A|+|B|+|C|+|E|.
\]
Let \((g_a)_{a\in A}\) and \((h_b)_{b\in B}\) be independent standard real Gaussian families.  For a deterministic tensor \(H=(H_{a,b,c,e})\), define the random matrix
\begin{equation}\label{eq:tensor-Z}
        Z_{e,c}=\sum_{a\in A}\sum_{b\in B}H_{a,b,c,e}g_ah_b .
\end{equation}
Define the four-cut norm
\begin{equation}\label{eq:four-flattenings}
\begin{aligned}
\mathfrak R_4(H):=\max\bigl\{&
  \|H\|_{\ell^2_{a,b,c}\to\ell^2_e},
  \ \|H\|_{\ell^2_{a,b,e}\to\ell^2_c},\\
 &\|H\|_{\ell^2_{a,c}\to\ell^2_{b,e}},
  \ \|H\|_{\ell^2_{b,c}\to\ell^2_{a,e}}
\bigr\}.
\end{aligned}
\end{equation}
Then, for every \(p\ge2\),
\begin{equation}\label{eq:tensor-bound}
        \bigl\|\|Z\|_{\ell^2_c\to\ell^2_e}\bigr\|_{L^p(\Omega)}
        \le C\bigl(p+\log\Lfin\bigr)\mathfrak R_4(H),
\end{equation}
where \(C\) is an absolute constant.  The same bound holds for complex Gaussian families after realification and, after the order-two decoupling in Lemma~\ref{lem:local-normal-form}, for arbitrary finite centered quadratic Gaussian chaoses.  It is also stable under the finite sign, real-Fourier and auxiliary-coordinate patterns produced there: each pattern is estimated separately, and a bounded enlargement of the Gaussian legs or a finite sum of pattern bounds changes only the implicit constant.
\end{proposition}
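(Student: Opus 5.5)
The proof runs through two successive applications of the rectangular non-commutative Khintchine inequality (Pisier, Lust-Piquard; Tropp's matrix form with the \(\sqrt{p+\log\text{dim}}\) constant), one Gaussian family at a time. We condition on \(h\) and view
\[
Z=\sum_{a\in A}g_a Z_a,\qquad Z_a:=\sum_{b}H_{a,b,\cdot,\cdot}h_b\in\cL(\ell^2_c,\ell^2_e).
\]
Khintchine for Gaussian matrix series gives
\[
\E_g\|Z\|^p\le \bigl(C\sqrt{p+\log\Lfin}\bigr)^p\max\Bigl\{\bigl\|\textstyle\sum_aZ_aZ_a^*\bigr\|^{1/2},\bigl\|\textstyle\sum_aZ_a^*Z_a\bigr\|^{1/2}\Bigr\}^p .
\]
We then identify the two square functions with flattened operators: \(\|\sum_aZ_aZ_a^*\|^{1/2}\) is the norm of the map \(\ell^2_{a,c}\to\ell^2_e\) given by \((a,c)\mapsto Z_a(e,c)\), and \(\|\sum_aZ_a^*Z_a\|^{1/2}\) is the norm of the map \(\ell^2_c\to\ell^2_{a,e}\).

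Each of these is again linear in \(h\):
\[
W^{(1)}=\sum_b h_b H^{(1)}_b:\ \ell^2_{a,c}\to\ell^2_e,\qquad W^{(2)}=\sum_b h_bH^{(2)}_b:\ \ell^2_c\to\ell^2_{a,e}.
\]
We take \(L^{p}\) in \(h\), using \(L^{p}(\Omega;L^\infty)\) monotonicity and the fact that the maximum of two norms is bounded by their sum. A second application of Khintchine then bounds each \(W^{(i)}\) by the two square functions in \(b\), and each of those is again a flattening norm. For \(W^{(1)}\), \(\|\sum_bH_bH_b^*\|^{1/2}=\|H\|_{\ell^2_{a,b,c}\to\ell^2_e}\), and \(\|\sum_bH_b^*H_b\|^{1/2}=\|H\|_{\ell^2_{a,c}\to\ell^2_{b,e}}\). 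For \(W^{(2)}\), one square function gives \(\|H\|_{\ell^2_{b,c}\to\ell^2_{a,e}}\) and the other gives \(\|H\|_{\ell^2_c\to\ell^2_{a,b,e}}\), which equals \(\|H\|_{\ell^2_{a,b,e}\to\ell^2_c}\) by duality. So exactly the four cuts in \(\mathfrak R_4\) appear.

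The dimensions entering the logarithms are at most \(|A||C|\), \(|E|\), and so on, so each logarithm is \(\lesssim\log\Lfin\) because \(\log(xy)=\log x+\log y\). Two factors \(\sqrt{p+\log\Lfin}\) multiply to give \(C(p+\log\Lfin)\). Fubini makes the iterated conditioning legitimate, since \(g\) and \(h\) are independent.

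The remaining clauses are formal. For complex families we split into real and imaginary parts, which produces a bounded number of real Gaussian families. Enlarging the Gaussian legs by the auxiliary index \(\mu\le d_0\) only enlarges \(A\) and \(B\), and the four cut norms do not change under relabelling. For a centered quadratic chaos, Lemma~\ref{lem:local-normal-form}, estimate \eqref{eq:local-decoupling}, applied with \(X=\cL(\ell^2_c,\ell^2_e)\), replaces \(\wick{G_aG_b}\) by \(G_a\widetilde G_b\) at the cost of \(C_{\rm dec}\), which reduces to the independent case. Finite sums over patterns \(\nu\) are handled by the triangle inequality.

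The main point needing care is the second Khintchine step, because it must act on the square-function operators themselves and not on their norms taken pathwise. The way round this is to realize \(\|\sum_aZ_aZ_a^*\|^{1/2}\) exactly as the norm of a matrix that is linear in \(h\), as above. The identification of the resulting square functions with the four cuts is index bookkeeping, and I would verify it carefully, in particular the duality step used for the \(\ell^2_{a,b,e}\to\ell^2_c\) cut.
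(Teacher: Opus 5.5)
Your proposal is correct and is essentially the paper's argument. You apply rectangular non-commutative Khintchine twice, realizing each square function exactly as a flattened matrix that is linear in the remaining Gaussian family, so that precisely the four cuts of $\mathfrak R_4(H)$ appear; your index identifications are right, including the adjoint step for $\ell^2_c\to\ell^2_{a,b,e}$.

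The differences are only cosmetic. You condition on $h$ first rather than on $g$, and you use the operator-norm form of Khintchine with constant $\sqrt{p+\log\dim}$. The paper instead works in $S_{s_0}$ with $s_0\sim p+\log\Lfin$, which is exactly how that operator-norm form is proved.
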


\begin{proof}
We give the proof in real coordinates.  Writing complex coordinates in real form only multiplies the finite index sets by an absolute factor and agrees with the convention of Lemma~\ref{lem:local-normal-form}.  Choose
\[
        s_0=\left\lceil C_0\bigl(p+\log\Lfin\bigr)\right\rceil
\]
with \(C_0\) large.  Then \(s_0\ge p\), and it is enough to estimate
\(\|Z\|_{L^{s_0}(\Omega;S_{s_0})}\), because
\(\|Z\|_{\op}\le \|Z\|_{S_{s_0}}\).  The Schatten spaces below act on one of the one- or two-factor spaces
\[
        \ell^2_c,
        \quad \ell^2_e,
        \quad \ell^2_b\otimes\ell^2_c,
        \quad \ell^2_b\otimes\ell^2_e,
        \quad \ell^2_a\otimes\ell^2_c,
        \quad \ell^2_a\otimes\ell^2_e .
\]
Their dimensions are at most \(\Lfin^2\).  Hence every passage from an operator norm to an \(S_{s_0}\)-norm costs at most \(\Lfin^{2/s_0}\), which is uniformly bounded by the choice of \(s_0\).

We use the rectangular non-commutative Khintchine inequality \cite{LustPisier}: for finite matrices \(A_b:H_1\to H_2\) and independent standard Gaussians \((\gamma_b)\),
\begin{equation}\label{eq:ncK-rectangular}
\left\|\sum_b\gamma_bA_b\right\|_{L^{s_0}(\Omega;S_{s_0}(H_1,H_2))}
\lesssim \sqrt {s_0}\left(
\left\|\Big(\sum_bA_bA_b^*\Big)^{1/2}\right\|_{S_{s_0}(H_2)}
+
\left\|\Big(\sum_bA_b^*A_b\Big)^{1/2}\right\|_{S_{s_0}(H_1)}
\right).
\end{equation}
The displayed rectangular form follows from the usual self-adjoint form by placing \(A_b\) in the off-diagonal block of \(H_1\oplus H_2\); this is the standard self-adjoint block trick.

Condition first on the \(g\)-variables and write
\begin{equation}\label{eq:tensor-condition-h}
        Z=\sum_{b\in B}h_bS_b(g),
        \qquad
        S_b(g)_{e,c}=\sum_{a\in A}H_{a,b,c,e}g_a .
\end{equation}
Applying \eqref{eq:ncK-rectangular} to the Gaussian sum of operators \(S_b(g):\ell^2_c\to\ell^2_e\) gives
\begin{equation}\label{eq:ncK-first-strict}
\begin{aligned}
        \|Z\|_{L_h^{s_0}S_{s_0}}
        \lesssim \sqrt {s_0}\Bigg(&
        \left\|\Big(\sum_b S_b(g)S_b(g)^*\Big)^{1/2}\right\|_{S_{s_0}(\ell^2_e)}\\
        &+
        \left\|\Big(\sum_b S_b(g)^*S_b(g)\Big)^{1/2}\right\|_{S_{s_0}(\ell^2_c)}\Bigg).
\end{aligned}
\end{equation}
We estimate the two square functions in \(L_g^{s_0}\).

For the first square function define
\[
        \mathcal S(g):\ell^2_b\otimes\ell^2_c\longrightarrow\ell^2_e,
        \qquad
        \mathcal S(g)_{e,(b,c)}=S_b(g)_{e,c}.
\]
Then, for every Schatten exponent \(r\ge2\),
\[
        \|\mathcal S(g)\|_{S_r}
        =\left\|\Big(\sum_bS_b(g)S_b(g)^*\Big)^{1/2}\right\|_{S_r}.
\]
Write \(\mathcal S(g)=\sum_ag_aT_a\), where \((T_a)_{e,(b,c)}=H_{a,b,c,e}\).  A second application of \eqref{eq:ncK-rectangular} yields
\begin{equation}\label{eq:first-square-bound}
\begin{aligned}
        \|\mathcal S(g)\|_{L_g^{s_0}S_{s_0}}
        &\lesssim \sqrt {s_0}\left(
        \left\|\Big(\sum_aT_aT_a^*\Big)^{1/2}\right\|_{S_{s_0}}
        +
        \left\|\Big(\sum_aT_a^*T_a\Big)^{1/2}\right\|_{S_{s_0}}
        \right)\\
        &\lesssim \sqrt {s_0}\left(
        \|H\|_{\ell^2_{a,b,c}\to\ell^2_e}
        +
        \|H\|_{\ell^2_{b,c}\to\ell^2_{a,e}}
        \right).
\end{aligned}
\end{equation}
Indeed, \(\|\sum_aT_aT_a^*\|_{\op}^{1/2}\) is the norm of the flattening \((a,b,c)\to e\), while \(\|\sum_aT_a^*T_a\|_{\op}^{1/2}\) is the norm of \((b,c)\to(a,e)\).

For the second square function define
\[
        \mathcal T(g):\ell^2_c\longrightarrow\ell^2_b\otimes\ell^2_e,
        \qquad
        \mathcal T(g)_{(b,e),c}=S_b(g)_{e,c}.
\]
Similarly,
\[
        \|\mathcal T(g)\|_{S_r}
        =\left\|\Big(\sum_bS_b(g)^*S_b(g)\Big)^{1/2}\right\|_{S_r}.
\]
With \(\mathcal T(g)=\sum_ag_aT'_a\), the same argument gives
\begin{equation}\label{eq:second-square-bound}
        \|\mathcal T(g)\|_{L_g^{s_0}S_{s_0}}
        \lesssim \sqrt {s_0}\left(
        \|H\|_{\ell^2_{a,c}\to\ell^2_{b,e}}
        +
        \|H\|_{\ell^2_c\to\ell^2_{a,b,e}}
        \right).
\end{equation}
The last norm is the adjoint of the flattening
\((a,b,e)\to c\).  Thus the two square functions produce exactly the four
norms in \eqref{eq:four-flattenings}; reversing the order of conditioning
only interchanges the two mixed cuts.  Consequently
\[
\begin{aligned}
        \|Z\|_{L_{g,h}^{s_0}S_{s_0}}
        &\lesssim \sqrt{s_0}\bigl(
        \|\mathcal S(g)\|_{L_g^{s_0}S_{s_0}}
        +\|\mathcal T(g)\|_{L_g^{s_0}S_{s_0}}\bigr)\\
        &\lesssim s_0\mathfrak R_4(H).
\end{aligned}
\]
This proves \eqref{eq:tensor-bound}, since
\(s_0\lesssim p+\log\Lfin\).

For same-family Wick squares, the decoupling inequality for homogeneous Gaussian chaoses with values in \(X=\cL(\ell_c^2,\ell_e^2)\) gives
\begin{equation}\label{eq:decouple-wick}
        \left\|\sum_{a,b}C_{a,b}\wick{g_ag_b}\right\|_{L^p(\Omega;X)}
        \lesssim
        \left\|\sum_{a,b}C_{a,b}g_a h_b\right\|_{L^p(\Omega_g\times\Omega_h;X)},
\end{equation}
where \((h_b)\) is an independent copy.  The scalar expectation is absent because the Wick contraction has already been placed in the deterministic diagonal.  Applying the decoupled estimate above proves the Wick-square statement.
\end{proof}

\begin{remark}\label{rem:tensor-sharpness}
The factor \(p+\log\Lfin\) is absorbed by the strict dyadic losses below.
\end{remark}

\paragraph{Incidence counts.}\label{subsec:incidence-counts}
Apply Proposition~\ref{prop:tensor} to each pattern in
\eqref{eq:centered-normal-form}, with
\[
        a=(\underline\ell,\mu),
        \qquad b=(\underline r,\lambda),
        \qquad c=q,
        \qquad e=n.
\]
For fixed \(\nu\), set
\(\ell=\epsilon_{\nu,1}\underline\ell\) and
\(r=\epsilon_{\nu,2}\underline r\).  The sign changes are bijections on the
high shells, so the Schur bookkeeping is the same as for
\[
        n=q+\ell+r,
        \qquad |\ell|\sim |r|\sim N,
        \qquad |q|\sim Q,
        \qquad |n|\sim M,
        \qquad Q\le c_{\rm ap}N.
\]
The bounded auxiliary indices and the finite pattern set are suppressed.

\begin{lemma}[Incidence flattening bounds]\label{lem:incidence-flattenings}
For the tensor \(H\) in Lemma~\ref{lem:local-normal-form}, the four
oriented flattenings in \eqref{eq:four-flattenings} obey
\begin{equation}\label{eq:R-H-count}
        \mathfrak R_4(H)
        \lesssim N^{d/2-\Gamma_{i;j,k}}\bigl(M^{d/2}+Q^{d/2}\bigr),
\end{equation}
with constants depending only on the fixed Littlewood--Paley overlap and on the finite local Gaussian dimension.
\end{lemma}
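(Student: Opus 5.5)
Each of the four flattenings will be bounded by a Schur test, i.e.\ by the Hilbert--Schmidt-type bound \(\|H\|_{X\to Y}\le \|H\|_{\ell^\infty}\,(\sup_{y}\#\{x\})^{1/2}(\sup_{x}\#\{y\})^{1/2}\), where the suprema count the nonzero entries in a fixed row and column of the flattened matrix. By \eqref{eq:H-size} every entry is \(O(N^{-\Gamma})\), with \(\Gamma=\Gamma_{i;j,k}\). The bounded auxiliary indices \(\mu,\lambda\) and the finite pattern index \(\nu\) only change constants, and the signs \(\epsilon_{\nu,1},\epsilon_{\nu,2}\) are bijections on the shells, as in Remark~\ref{rem:patternwise-tensor}. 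So everything reduces to counting lattice solutions of \(n=q+\ell+r\) with \(|\ell|\sim|r|\sim N\), \(|q|\sim Q\), \(|n|\sim M\), using \eqref{eq:shell-count} and \(Q\le c_{\rm ap}N\), \(M\lesssim N\).

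The counts for the four cuts are as follows.
\begin{enumerate}[label=(\arabic*),leftmargin=2.2em]
\item Cut \((a,b,c)\to e\). For fixed \(n\), the free variables are \(q\) and \(\ell\), and \(r\) is then determined, giving at most \(Q^dN^d\) entries per row. For fixed \((\ell,r,q)\), the output \(n\) is determined, so each column has at most one entry. The norm is therefore at most \(N^{-\Gamma}(Q^dN^d)^{1/2}=N^{d/2-\Gamma}Q^{d/2}\).
\item Cut \((a,b,e)\to c\). Here \(q\) is fixed, \(n\) and \(\ell\) are free, and \(r\) is determined, giving \(M^dN^d\) entries per row and one per column. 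The bound is \(N^{d/2-\Gamma}M^{d/2}\).
\item Cut \((a,c)\to(b,e)\). For fixed \((\ell,q)\), \(r\) ranges over the shell and \(n=q+\ell+r\) is determined, but \(|n|\sim M\) forces \(r\in n-q-\ell\), a ball of radius \(\sim M\). So the count is \(\min(N,M)^d=M^d\). Symmetrically, for fixed \((r,n)\) the choices of \((\ell,q)\) are counted by \(q\), since \(\ell\) is then determined, giving at most \(Q^d\). Schur yields \(N^{-\Gamma}(M^dQ^d)^{1/2}\le N^{-\Gamma}(M^d+Q^d)\). Because \(M,Q\lesssim N\), this is at most \(N^{d/2-\Gamma}(M^{d/2}+Q^{d/2})\).
\item Cut \((b,c)\to(a,e)\). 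This is identical to (3) with \(\ell\) and \(r\) exchanged.
\end{enumerate}
Taking the maximum of the four bounds gives \eqref{eq:R-H-count}.

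The step needing the most care is the mixed cuts (3)--(4). The naive count there gives \(N^d\) for fixed \((\ell,q)\), which would lose the \(M^{d/2}\) structure. It must be replaced by the output constraint \(|n|\sim M\), which localizes \(r\) to a ball of radius \(M\) around \(n-q-\ell\). One should also verify that this localization is compatible with the smooth Littlewood--Paley overlap and with the resonant cutoff \(\chi_{\rm res}\). The cutoff \(\chi_{\rm res}\) only restricts the support, so the counts remain upper bounds.

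For the mixed cuts an alternative is to bound the operator norm directly via \(\sup\) over rows and columns in the Schur test with weights equal to one, which is the same computation. If that count is inconvenient, one can estimate the norm crudely by the Hilbert--Schmidt norm divided appropriately. The stated bound is weaker than the geometric mean, so any version of the Schur test suffices.
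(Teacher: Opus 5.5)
Your proposal is correct and matches the paper's proof: a Schur test with entry bound \(N^{-\Gamma_{i;j,k}}\) from \eqref{eq:H-size}, the same row/column degrees \((N^dQ^d,1)\), \((N^dM^d,1)\), \((Q^d,M^d)\), \((Q^d,M^d)\) read off from the incidence \(n=q+\ell+r\), and then \(M,Q\lesssim N\). Drop your closing Hilbert--Schmidt fallback: for the mixed cuts it only gives \(N^{d/2-\Gamma_{i;j,k}}M^{d/2}Q^{d/2}\), which loses a factor \(\min(M,Q)^{d/2}\), so the Schur count you actually carried out is the one needed.
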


\begin{proof}
For a flattening \(X\to Y\), call the row degree the maximal number of admissible \(X\)-tuples for a fixed \(Y\)-tuple, and the column degree the maximal number of admissible \(Y\)-tuples for a fixed \(X\)-tuple.  Since each nonzero entry is bounded by \(N^{-\Gamma_{i;j,k}}\), Schur's test bounds the flattening by \(N^{-\Gamma_{i;j,k}}\) times the square root of the product of the row and column degrees.  The bounded ranges of \(\mu\) and \(\lambda\) are absorbed in the constants.

The relevant degrees are:
\begin{center}
\begin{tabular}{lll}
\toprule
Flattening & row degree & column degree\\
\midrule
\((\ell,r,q)\to n\) & \(O(N^dQ^d)\) & \(O(1)\)\\
\((\ell,r,n)\to q\) & \(O(N^dM^d)\) & \(O(1)\)\\
\((\ell,q)\to(r,n)\) & \(O(Q^d)\) & \(O(M^d)\)\\
\((r,q)\to(\ell,n)\) & \(O(Q^d)\) & \(O(M^d)\)\\
\bottomrule
\end{tabular}
\end{center}

For \((\ell,r,q)\to n\), fixing \(n\) leaves \(q\) in a shell of
size \(O(Q^d)\) and \(\ell\) in a high shell of size \(O(N^d)\); then
\(r=n-q-\ell\) is determined.  The second row is identical with \(M\) in
place of \(Q\).

For \((\ell,q)\to(r,n)\), fixing \((r,n)\) restricts \(\ell\) to a
translate of the \(Q\)-shell, whereas fixing \((\ell,q)\) leaves at most
\(O(M^d)\) choices of \(r\).  Interchanging \(\ell\) and \(r\) gives the
last row.

Multiplying the square roots of the row and column degrees gives the profiles
\[
        N^{d/2}Q^{d/2},\qquad
        N^{d/2}M^{d/2},\qquad
        M^{d/2}Q^{d/2}.
\]
Since \(M,Q\lesssim N\), all of them are bounded by \(N^{d/2}(M^{d/2}+Q^{d/2})\).  Combining this with \eqref{eq:H-size} proves \eqref{eq:R-H-count}.
\end{proof}

After absorbing logarithms, Proposition~\ref{prop:tensor} gives the fixed-time random matrix estimate
\begin{equation}\label{eq:fixed-time-B}
        \left\|\|B_{\Lambda,N,Q,M}^{i;j,k}(t,s)\|_{\ell^2_q\to\ell^2_n}\right\|_{L^p(\Omega)}
        \lesssim_{p,T,\eps}
        N^{d/2-\Gamma_{i;j,k}+\eps}\bigl(M^{d/2+\eps}+Q^{d/2+\eps}\bigr).
\end{equation}

\paragraph{Time lift and cutoff tails.}
The fixed-time matrix estimate is upgraded to the two-time supremum by the
following Banach-valued Kolmogorov estimate.

\begin{lemma}[Banach-valued time lift]\label{lem:time-lift}
Let \(X(t,s)\), \(0\le s\le t\le T\), be a random field with values in a finite-dimensional Banach space \(B\).  Assume that for some \(A_N\), all \(p_0\ge2\), some \(\eta\in(0,1]\), and some \(\chi\ge0\),
\begin{align}
        \|\|X(t,s)\|_B\|_{L^{p_0}(\Omega)}&\le C_{p_0}A_N,\label{eq:time-lift-hyp-0}\\
        \|\|X(t,s)-X(t',s')\|_B\|_{L^{p_0}(\Omega)}
        &\le C_{p_0}A_NN^{\chi\eta}
        (|t-t'|^\eta+|s-s'|^\eta),\label{eq:time-lift-hyp-1}
\end{align}
whenever both pairs \((t,s)\) and \((t',s')\) lie in the Volterra simplex.  Then, for every fixed \(p\ge2\) and every \(\eps>0\), if \(\chi\eta<\eps/4\),
\begin{equation}\label{eq:abstract-time-lift}
        \|\|X\|_{C_{t,s}B}\|_{L^p(\Omega)}
        \le C_{p,T,\eps}A_NN^\eps .
\end{equation}
The same statement holds for a separable Banach-valued modification.  In the application below \(B=\cL(\ell_q^2,\ell_n^2)\), a finite-dimensional matrix space; the continuous-time norm is obtained from rational time skeletons.
\end{lemma}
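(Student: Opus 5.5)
This is a Kolmogorov chaining argument on dyadic time grids in the two-dimensional simplex \(\Delta_T\), carried out with \(B\)-valued moments. Finite-dimensionality of \(B\) enters only to make the supremum over a countable dense set measurable and to give a continuous modification. Fix \(p\ge2\) and \(\eps>0\), and choose an auxiliary moment \(p_0\ge p\) large, depending on \(\eps\) and \(\eta\). Then normalize \(Y:=X/(A_N N^{\chi\eta})\), so that \(Y\) satisfies
\[
\|\|Y(t,s)-Y(t',s')\|_B\|_{L^{p_0}}\le C_{p_0}\bigl(|t-t'|^\eta+|s-s'|^\eta\bigr),
\qquad \|\|Y(t,s)\|_B\|_{L^{p_0}}\le C_{p_0}N^{-\chi\eta}.
\]

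I will work on the dyadic skeletons \(\mathcal D_m=\Delta_T\cap(2^{-m}T\Z)^2\). Each has \(O(4^m)\) points, and any point of \(\mathcal D_{m+1}\) lies within distance \(2^{-m}T\) of a point of \(\mathcal D_m\) that is still in the simplex. The simplex is convex and the grid contains the diagonal, so a nearest grid point can be chosen with \(s'\le t'\). A union bound gives
\[
\Bigl\|\max_{\text{neighbours in }\mathcal D_{m+1}}\|Y(x)-Y(x')\|_B\Bigr\|_{L^{p_0}}\le C\,4^{m/p_0}2^{-m\eta}.
\]
This is summable in \(m\) once \(p_0>2/\eta\). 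Telescoping along the standard chaining path to the root point \((0,0)\) then bounds \(\|\sup_{\mathcal D}\|Y-Y(0,0)\|_B\|_{L^{p_0}}\) by a constant \(C_{p_0,\eta,T}\). Here \(\mathcal D=\bigcup_m\mathcal D_m\) is the rational skeleton. Adding the single-point bound \eqref{eq:time-lift-hyp-0} at \((0,0)\) and returning to \(X\) gives
\[
\|\sup_{\mathcal D}\|X\|_B\|_{L^{p}}\le \|\sup_{\mathcal D}\|X\|_B\|_{L^{p_0}}\lesssim A_N\bigl(1+N^{\chi\eta}\bigr)\lesssim A_NN^{\chi\eta}\le A_NN^{\eps/4}\le A_NN^\eps.
\]
The first inequality uses \(p\le p_0\), and the last steps use \(\chi\eta<\eps/4\).

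The same chaining yields a modulus of continuity of Hölder type \(\eta'<\eta-2/p_0\) on \(\mathcal D\), with integrable random constant. So \(X|_{\mathcal D}\) extends uniquely to a continuous \(B\)-valued field on \(\Delta_T\). By \eqref{eq:time-lift-hyp-1} and Fatou's lemma this extension is a modification of \(X\), and its \(C_{t,s}B\) norm equals \(\sup_{\mathcal D}\|X\|_B\). This proves \eqref{eq:abstract-time-lift}. For a separable Banach-valued field the argument is identical: only the triangle inequality in \(B\) and measurability of countable suprema are used.

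The main point to watch is the bookkeeping of the increment factor \(N^{\chi\eta}\). It must be kept multiplicative rather than absorbed into an \(N\)-dependent Hölder constant, and \(p_0\) must be chosen depending only on \(\eta\) and \(p\). Then \(C_{p,T,\eps}\) is independent of \(N\), of the cutoff, and of the grid. Uniformity in \(p_0\) of the hypotheses is exactly what permits this choice. The geometric condition that chaining paths stay inside the simplex is handled by the grid remark above.
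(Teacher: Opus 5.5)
Your proposal is correct and takes essentially the same route as the paper. Both arguments chain on dyadic grids in the simplex, choosing parents inside \(\Delta_T\). A union bound gives the level-\(L\) factor \(2^{2L/p_0}2^{-L\eta}\), and summing over levels yields the bound \(A_N(1+N^{\chi\eta})\), which is then passed to the continuous modification through the rational skeleton. The differences are cosmetic: you chain to the single root \((0,0)\) rather than from a finite level-zero grid, and you only need \(p_0>\max\{p,2/\eta\}\) instead of the paper's larger choice. You also write out the Kolmogorov extension and the Fatou step, which the paper only asserts.
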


\begin{proof}
We prove the estimate on dyadic rational grids and then pass to the separable modification.  Let
\[
        \Delta_T=\{(t,s):0\le s\le t\le T\},
        \qquad
        \Gamma_L=2^{-L}\Z\cap[0,T],
        \qquad
        \Delta_L=\Delta_T\cap\Gamma_L^2 .
\]
The cardinality of \(\Delta_L\) is \(O_T(2^{2L})\).  Choose
\[
        p_0>\max\{p,4/\eta\}+4/\eta .
\]
This leaves room for the two-dimensional grid count.

The grid at level zero is finite, so \eqref{eq:time-lift-hyp-0} gives
\[
        \left\|\sup_{(t,s)\in\Delta_0}\|X(t,s)\|_B\right\|_{L^p}
        \lesssim_{p,T} A_N .
\]
For \(L\ge0\), choose for each \((t,s)\in\Delta_{L+1}\) a point \(\pi_L(t,s)=(\bar t,\bar s)\in\Delta_L\) with
\[
        |t-\bar t|+|s-\bar s|\lesssim_T 2^{-L}.
\]
If a nearest grid point lies outside the simplex, first project to the boundary \(s=t\) and then to the grid; the same distance bound holds.  By \eqref{eq:time-lift-hyp-1},
\[
        \|X(t,s)-X(\pi_L(t,s))\|_{L^{p_0}(\Omega;B)}
        \lesssim_{p_0,T} A_NN^{\chi\eta}2^{-L\eta}.
\]
Since \(\Delta_{L+1}\) has \(O_T(2^{2L})\) points,
\[
\begin{aligned}
        &\left\|\sup_{(t,s)\in\Delta_{L+1}}
        \|X(t,s)-X(\pi_L(t,s))\|_B\right\|_{L^p} \\
        &\qquad\le
        \left\|\sup_{(t,s)\in\Delta_{L+1}}
        \|X(t,s)-X(\pi_L(t,s))\|_B\right\|_{L^{p_0}} \\
        &\qquad\lesssim
        2^{2L/p_0} A_NN^{\chi\eta}2^{-L\eta}.
\end{aligned}
\]
The series \(\sum_L2^{-L(\eta-2/p_0)}\) converges.  The dyadic chaining estimate gives
\[
        \left\|\sup_{(t,s)\in\bigcup_L\Delta_L}\|X(t,s)\|_B\right\|_{L^p}
        \lesssim_{p,T,\eta} A_N(1+N^{\chi\eta}).
\]
For a separable continuous modification, the supremum over the rational skeleton equals the \(C(\Delta_T;B)\)-norm.  In the finite matrix application this modification is obtained from the increment estimates for the matrix entries and the finite-dimensional operator norm.  Finally \(\chi\eta<\eps/4\) gives \(N^{\chi\eta}\le N^\eps\) for \(N\ge1\), and finitely many low dyadic shells are absorbed into the constant.  This proves \eqref{eq:abstract-time-lift}.
\end{proof}

We apply Lemma~\ref{lem:time-lift} with \(B=\cL(\ell_q^2,\ell_n^2)\).  Fix the strict dyadic loss \(\eps\).  Choose \(0<\theta\le\eta_0\) so that \(\chi_{i;j,k}\theta<\eps/4\), and set \(\eta=\theta\).  The fixed-time estimate is \eqref{eq:fixed-time-B}.  It remains to verify the increment hypothesis in \eqref{eq:time-lift-hyp-1}.

A centered summand has the form
\[
        K_{t,s}\wick{X_sY_t},
        \qquad
        K_{t,s}=K_i(t-s,q+\ell),
        \quad
        X_s=\widehat\Psi_j(\ell,s),
        \quad
        Y_t=\widehat\Psi_k(r,t).
\]
The degree-two Wick projection is bilinear after the scalar expectation has been removed.  Hence
\[
        \wick{X_sY_t}-\wick{X_sY_{t'}}
        =\wick{X_s(Y_t-Y_{t'})},
        \qquad
        \wick{X_sY_t}-\wick{X_{s'}Y_t}
        =\wick{(X_s-X_{s'})Y_t} .
\]
The upper-time increment is
\[
\begin{aligned}
        K_{t,s}\wick{X_sY_t}-K_{t',s}\wick{X_sY_{t'}}
        &=(K_{t,s}-K_{t',s})\wick{X_sY_t} \\
        &\quad+K_{t',s}\wick{X_s(Y_t-Y_{t'})},
\end{aligned}
\]
and the lower-time increment is
\[
\begin{aligned}
        K_{t,s}\wick{X_sY_t}-K_{t,s'}\wick{X_{s'}Y_t}
        &=(K_{t,s}-K_{t,s'})\wick{X_sY_t} \\
        &\quad+K_{t,s'}\wick{(X_s-X_{s'})Y_t} .
\end{aligned}
\]
For two arbitrary Volterra pairs \((t,s),(t',s')\in\Delta_T:=\{0\le s\le t\le T\}\), the telescope is taken along a path that stays inside the simplex.  Put \(u=\min\{s,s'\}\) and write, for \(F(t,s):=K_{t,s}\wick{X_sY_t}\),
\[
        F(t,s)-F(t',s')=\bigl(F(t,s)-F(t,u)\bigr)
        +\bigl(F(t,u)-F(t',u)\bigr)
        +\bigl(F(t',u)-F(t',s')\bigr).
\]
All four points \((t,s),(t,u),(t',u),(t',s')\) belong to \(\Delta_T\), since \(u\le s\le t\) and \(u\le s'\le t'\).  The first and third terms are lower-time increments, while the middle term is an upper-time increment at the common lower time \(u\).  Their increment lengths are bounded by \(|s-s'|\), \(|t-t'|\) and \(|s-s'|\), respectively.  Thus each increment tensor is a finite sum of tensors with one increment factor.  In same-color blocks the scalar covariance pairing has already been subtracted at finite cutoff; the increment tensors are centered second chaoses and are decoupled as in Lemma~\ref{lem:local-normal-form}.  The spatial incidence remains \(n=q+\ell+r\) in every telescope summand.

Assumption~\ref{ass:coeff} gives the increment factor \(|t-t'|^\theta N^{\chi_{i;j,k}\theta}\) or \(|s-s'|^\theta N^{\chi_{i;j,k}\theta}\), while the deterministic flattening counts are unchanged.  Applying Proposition~\ref{prop:tensor} to these finitely many increment tensors and then Lemma~\ref{lem:time-lift} gives
\begin{equation}\label{eq:time-lift}
        \left\|\|B_{\Lambda,N,Q,M}^{i;j,k}\|_{C_{t,s}\cL(\ell_q^2,\ell_n^2)}\right\|_{L^p(\Omega)}
        \lesssim_{p,T,\eps}
        N^{d/2-\Gamma_{i;j,k}+\eps}\bigl(M^{d/2+\eps}+Q^{d/2+\eps}\bigr).
\end{equation}
For cutoff tails, fix \((N,Q,M)\).  With sharp Galerkin cutoffs the block is eventually constant once \(\Lambda\gg N+Q+M\).  With a smooth stochastic-leg cutoff the block contains only finitely many Fourier modes and the cutoff symbols converge entrywise; hence the finite matrix converges in \(C_{t,s}\cL(\ell_q^2,\ell_n^2)\).  The same statement holds for the increment tensors.  Global cutoff Cauchy convergence is obtained by combining this fixed-block convergence with the summable dyadic majorants in Section~\ref{sec:assembly}.

\subsection{Dyadic summation and cutoff convergence}\label{sec:assembly}

We use the following elementary dyadic estimates.  For any \(a\in\R\), dyadic \(N\ge1\), and \(\rho>0\),
\begin{equation}\label{eq:dyadic-sum-basic}
        \sum_{L\lesssim N}L^a\lesssim_\rho N^{a_++\rho},
        \qquad
        \sum_{L\gtrsim N}L^{-a}\lesssim_\rho N^{-a+\rho}\quad(a>0),
\end{equation}
where \(a_+=\max\{a,0\}\).  The inhomogeneous low block is absorbed in the implicit constant.  Every occurrence of \(\rho\) below is taken smaller than the fixed strict loss in Definition~\ref{def:admissible}.

We now insert the deterministic input and sum \eqref{eq:time-lift}.  Fix a triple \((i;j,k)\), and write
\[
        \lambda=\lambda_i,
        \qquad
        \Gamma=\Gamma_{i;j,k}.
\]
For deterministic \(w\), the transition from the matrix estimate to the inserted operator is made before the dyadic summation.  For each \(t\), the Volterra expression is bounded by the \(C_{t,s}\cL(\ell_q^2,\ell_n^2)\)-norm of the kernel times \(\|P_Qw\|_{L_T^\infty L^2}\); Minkowski's inequality in \(s\) and the estimate \(T\le T_0\) give the same bound in \(C_TL^2\) and in \(L_T^1L^2\).  Hence
\begin{equation}\label{eq:insert-L2}
        \|P_MB_{N,Q,M}^{i;j,k}P_Qw\|_{L^p(\Omega;C_TL^2)}
        \lesssim
        N^{d/2-\Gamma+\eps}\bigl(M^{d/2+\eps}+Q^{d/2+\eps}\bigr)
        \|P_Qw\|_{L_T^\infty L^2}.
\end{equation}
Using \(\|P_Qw\|_{L_T^\infty L^2}\le Q^{-\sigma}\|w\|_{L_T^\infty B^\sigma_{2,\infty}}\), the Sobolev block is bounded by
\begin{equation}\label{eq:H-block}
        N^{d/2-\Gamma+\eps}M^{s-\lambda}
        \bigl(M^{d/2+\eps}+Q^{d/2+\eps}\bigr)Q^{-\sigma}
        \|w\|_{X_T^\sigma}.
\end{equation}
For the Besov target,
\begin{equation}\label{eq:B-block}
        M^{\sigma-\lambda}
        \|P_MB_{N,Q,M}^{i;j,k}P_Qw\|_{L^p(\Omega;L_T^1L^2)}
        \lesssim_T
        N^{d/2-\Gamma+\eps}M^{\sigma-\lambda}
        \bigl(M^{d/2+\eps}+Q^{d/2+\eps}\bigr)Q^{-\sigma}
        \|w\|_{X_T^\sigma}.
\end{equation}

\begin{lemma}[Dyadic finite-set/tail assembly]\label{lem:dyadic-tail-assembly}
Let \(\mathfrak D\) be the countable set of dyadic triples \(\mathbf d=(N,Q,M)\) satisfying \(Q\le c_{\rm ap}N\) and \(M\lesssim N\).  Let \(X\) and \(Y\) be Banach spaces, and let \(U_{\Lambda,\mathbf d}:X\to Y\) be finite-rank operators.  Assume the following two conditions.
\begin{enumerate}[label=(\roman*)]
\item For each fixed \(\mathbf d\), the family \(U_{\Lambda,\mathbf d}\) is Cauchy in \(\mathcal L(X,Y)\) as \(\Lambda\to\infty\).
\item There exist deterministic numbers \(a_{\mathbf d}\ge0\) with \(\sum_{\mathbf d\in\mathfrak D}a_{\mathbf d}<\infty\) such that
\[
        \sup_{\Lambda,\Lambda'}
        \|U_{\Lambda,\mathbf d}-U_{\Lambda',\mathbf d}\|_{\mathcal L(X,Y)}
        \le a_{\mathbf d}
        \qquad \text{for every }\mathbf d\in\mathfrak D .
\]
\end{enumerate}
Then \(\sum_{\mathbf d\in\mathfrak D}U_{\Lambda,\mathbf d}\) is Cauchy in \(\mathcal L(X,Y)\), and the limit is independent of the order in which finite dyadic sets exhaust \(\mathfrak D\).

For a \(B_{2,\infty}^{\zeta}\)-target, the same conclusion follows from output-shell majorants.  Suppose that the contribution of all triples with output shell \(M\) is bounded by \(b_M\ge0\), with \(\sum_Mb_M<\infty\).  Then, for \(\|x\|_X\le1\),
\begin{equation}\label{eq:Besov-tail-assembly}
        \sup_{M\ge M_0} M^\zeta
        \left\|P_M\sum_{(N,Q):(N,Q,M)\in\mathfrak D} U_{\Lambda,N,Q,M}x\right\|_{L_T^1L^2}
        \le \sum_{M\ge M_0} b_M
        \longrightarrow0 .
\end{equation}
The same implications hold in \(L^p(\Omega)\) if \(a_{\mathbf d}\) and \(b_M\) dominate the corresponding \(L^p(\Omega)\)-norms of the block operator differences.  If there are random majorants \(A_{\mathbf d}(\omega)\) with \(\sum_{\mathbf d}A_{\mathbf d}(\omega)<\infty\) almost surely and
\[
        \sup_{\Lambda,\Lambda'}
        \|U_{\Lambda,\mathbf d}(\omega)-U_{\Lambda',\mathbf d}(\omega)\|_{\mathcal L(X,Y)}
        \le A_{\mathbf d}(\omega),
\]
then the same finite-set/tail argument gives almost sure operator-norm Cauchy convergence.  In particular, summable \(L^1(\Omega)\) majorants imply this pathwise hypothesis by Fubini.
\end{lemma}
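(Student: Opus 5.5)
The plan is a standard Cauchy-criterion argument over the countable index set \(\mathfrak D\), carried out first deterministically and then with \(L^p(\Omega)\) and pathwise majorants.

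\emph{Reduction to a tail estimate.} Given \(\eta>0\), condition (ii) lets us choose a finite set \(F\subset\mathfrak D\) with \(\sum_{\mathbf d\notin F}a_{\mathbf d}<\eta\). For any \(\Lambda,\Lambda'\) split
\[
\sum_{\mathbf d}(U_{\Lambda,\mathbf d}-U_{\Lambda',\mathbf d})
=\sum_{\mathbf d\in F}(U_{\Lambda,\mathbf d}-U_{\Lambda',\mathbf d})
+\sum_{\mathbf d\notin F}(U_{\Lambda,\mathbf d}-U_{\Lambda',\mathbf d}).
\]
\emph{Finite part.} This is a finite sum. By (i), each term has norm below \(\eta/|F|\) once \(\Lambda,\Lambda'\) are large, so the finite part is smaller than \(\eta\).

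\emph{Tail.} By the triangle inequality and (ii), the tail is bounded by \(\sum_{\mathbf d\notin F}a_{\mathbf d}<\eta\). Here the infinite sum \(\sum_{\mathbf d}U_{\Lambda,\mathbf d}\) must first be given a meaning. If it is a formal object for finite cutoff, note that for a Galerkin cutoff only finitely many blocks are nonzero, so every sum is actually finite. In general, absolute convergence follows from (ii) combined with a single anchor \(\Lambda_0\) whose blocks are absolutely summable. I would state that, in the application, \(U_{\Lambda,\mathbf d}=0\) for \(\mathbf d\) outside a finite set depending on \(\Lambda\). Then the difference of the two sums genuinely equals the sum of the differences, and the sum is Cauchy in \(\cL(X,Y)\).

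\emph{Independence of the exhaustion.} Any two exhausting sequences of finite sets eventually contain \(F\), so their partial sums differ by at most \(2\eta\).

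\emph{Besov variant.} Fix \(x\) with \(\|x\|_X\le1\). Group the triples by output shell \(M\) and use that \(P_M\) only sees the blocks with that output index up to finite overlap. The \(B^\zeta_{2,\infty}\)-norm is a supremum over \(M\), so the tail \(\sup_{M\ge M_0}\) is bounded by \(\sup_{M\ge M_0}b_M\le\sum_{M\ge M_0}b_M\), which tends to zero. The shells \(M<M_0\) are finitely many, and they are handled as above: within each such shell, the sum over \((N,Q)\) is treated by the first part with the majorants \(a_{\mathbf d}\), or by summability in \(N\).

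\emph{Probabilistic versions.} In \(L^p(\Omega)\), replace norms by \(L^p(\Omega;\cL(X,Y))\)-norms. Minkowski's inequality is the triangle inequality there, so the argument is identical. For random majorants, run the deterministic argument for each fixed \(\omega\) in the full-measure set where \(\sum A_{\mathbf d}(\omega)<\infty\) and where (i) holds pathwise. For (i), since there are countably many \(\mathbf d\), a countable intersection of full-measure events suffices. Finally, if \(\sum_{\mathbf d}\E A_{\mathbf d}<\infty\), then by Tonelli/Fubini \(\E\sum_{\mathbf d}A_{\mathbf d}<\infty\), hence \(\sum_{\mathbf d}A_{\mathbf d}<\infty\) almost surely.

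\emph{Main obstacle.} The only delicate point is making sense of the infinite block sum for fixed \(\Lambda\). I will resolve it through finiteness of the Galerkin blocks, which makes every cutoff sum finite, as noted in the tail step.
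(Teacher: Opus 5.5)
Your proposal is correct and follows the paper's proof essentially step by step. You choose a finite set $F$ with $\sum_{\mathbf d\notin F}a_{\mathbf d}$ small, use (i) on $F$ and (ii) on the tail, and bound the Besov shell supremum by the $\ell^1_M$ tail; you then rerun the argument with $L^p(\Omega)$ norms via Minkowski and get almost sure convergence pathwise via Fubini. Your extra remarks make explicit two points the paper leaves implicit: the block sum at fixed $\Lambda$ is well defined because only finitely many Galerkin blocks are nonzero, and (i) must hold on a common full-measure event for the pathwise version.
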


\begin{proof}
Let \(\delta>0\).  Choose a finite set \(F\subset\mathfrak D\) such that \(\sum_{\mathbf d\notin F}a_{\mathbf d}<\delta\).  The tail satisfies
\[
        \left\|\sum_{\mathbf d\notin F}
        (U_{\Lambda,\mathbf d}-U_{\Lambda',\mathbf d})\right\|_{\mathcal L(X,Y)}
        \le \sum_{\mathbf d\notin F}a_{\mathbf d}<\delta,
\]
uniformly in \(\Lambda,\Lambda'\).  On the finite set \(F\), condition (i) gives Cauchy convergence of the finite sum.  This proves the Cauchy property.  Since the proof uses only tails of an \(\ell^1\)-family, the limit is unconditional with respect to the chosen finite exhaustion.

For the Besov target, the norm is the supremum over output shells after multiplication by the weight \(M^\zeta\).  Estimate \eqref{eq:Besov-tail-assembly} is just the inequality \(\sup_{M\ge M_0}x_M\le\sum_{M\ge M_0}x_M\) for non-negative shell bounds.  Thus an \(\ell^1_M\) output tail gives norm Cauchy convergence in \(B_{2,\infty}^\zeta\).  A uniform shell bound \(\sup_M b_M<\infty\) would give boundedness of each output shell but would not force the high-output tail to vanish.

The \(L^p\)-version is the same deterministic proof applied to \(L^p(\Omega)\)-norms, using Minkowski's inequality for finite sums and the \(\ell^1\) tail.  The almost sure version is pathwise.  If \(\sum_{\mathbf d}\E A_{\mathbf d}<\infty\), then Fubini gives \(\sum_{\mathbf d}A_{\mathbf d}(\omega)<\infty\) for almost every \(\omega\); applying the deterministic finite-set/tail proof on that event proves the assertion.
\end{proof}

\paragraph{Sobolev target.}
Write
\[
        \lambda=\lambda_i,\qquad \Gamma=\Gamma_{i;j,k},\qquad \beta=s-\lambda .
\]
The centered window gives
\begin{equation}\label{eq:window-short}
        \Gamma>\frac d2+10\eps,
        \qquad
        s<\lambda+\Gamma-d-10\eps,
        \qquad
        \max\{0,d-\Gamma\}+10\eps<\sigma<\lambda+\Gamma-d-10\eps .
\end{equation}
The first inequality gives the summability of the bare high-loop factor \(N^{d/2-\Gamma}\), with room for the logarithmic and time-lift losses.

Consider first the \(M^{d/2}\)-part of \eqref{eq:H-block}.  Since \(\sigma>0\), the dyadic sum in \(Q\le c_{\rm ap}N\) contributes at most a logarithmic factor, absorbed by \(N^\eps\).  The sum in \(M\lesssim N\) is
\[
        \sum_{M\lesssim N}M^{s-\lambda+d/2+\eps}
        \lesssim
        \begin{cases}
        1, & s-\lambda+d/2<-2\eps,\\
        N^{s-\lambda+d/2+2\eps}, & s-\lambda+d/2\ge -2\eps.
        \end{cases}
\]
If the first case occurs, the remaining high-shell power is \(N^{d/2-\Gamma+O(\eps)}\), which is summable because \(\Gamma>d/2\).  In the second case the high-shell power is
\[
        N^{d/2-\Gamma}N^{s-\lambda+d/2+O(\eps)}
        =N^{s-\lambda+d-\Gamma+O(\eps)},
\]
which is summable by the upper bound \(s<\lambda+\Gamma-d-10\eps\).  Thus the \(M^{d/2}\)-branch maps into \(C_TH^{s-\lambda}\).

For the \(Q^{d/2}\)-part of \eqref{eq:H-block}, we use the positive-part form of the same calculation:
\begin{equation}\label{eq:H-Q-positive}
        \sum_{M\lesssim N}M^{s-\lambda+\eps}
        \lesssim N^{(s-\lambda)_++2\eps},
        \qquad
        \sum_{Q\le c_{\rm ap}N}Q^{d/2-\sigma+\eps}
        \lesssim N^{(d/2-\sigma)_++2\eps}.
\end{equation}
The high-shell exponent is therefore
\begin{equation}\label{eq:H-Q-exponent}
        d/2-\Gamma+(s-\lambda)_++(d/2-\sigma)_+ +O(\eps).
\end{equation}
We check that it is negative under \eqref{eq:window-short}.  If \(\Gamma\le d\), then \(s<\lambda+\Gamma-d\le\lambda\), hence \((s-\lambda)_+=0\), and \(\sigma>d-\Gamma\).  When \(\sigma<d/2\), this gives
\[
        d/2-\Gamma+d/2-\sigma=d-\Gamma-\sigma<0;
\]
when \(\sigma\ge d/2\), the exponent is at most \(d/2-\Gamma<0\).  If \(\Gamma>d\), then \((s-\lambda)_+<\Gamma-d\) and \(\sigma>0\).  If \(\sigma<d/2\), the exponent is strictly smaller than
\[
        d/2-\Gamma+(\Gamma-d)+d/2-\sigma=-\sigma<0;
\]
if \(\sigma\ge d/2\), it is smaller than \(-d/2\).  The strict losses absorb the logarithmic factors.  This proves the centered bound into \(C_TH^{s-\lambda_i}\).

\paragraph{Besov target.}
For each output scale \(M\), let \(C_M\) denote the deterministic majorant obtained from \eqref{eq:B-block} after summing \(N\gtrsim M\) and \(Q\le c_{\rm ap}N\) with the Besov weight \(M^{\sigma-\lambda}\).  The estimates below show that the strict window implies \(\sum_M C_M<\infty\).  Consequently,
\[
        \left\|\sup_M X_M\right\|_{L^p(\Omega)}
        \le \left\|\sum_M X_M\right\|_{L^p(\Omega)}
        \le \sum_M \|X_M\|_{L^p(\Omega)}
\]
for nonnegative shell norms \(X_M\).

For the \(M^{d/2}\)-part, the \(Q\)-sum contributes only a logarithmic or bounded factor because \(\sigma>0\).  Since \(\Gamma>d/2\),
\begin{equation}\label{eq:B-M-general}
        M^{\sigma-\lambda+d/2+\eps}
        \sum_{N\gtrsim M}N^{d/2-\Gamma+\eps}
        \lesssim
        M^{\sigma-\lambda+d-\Gamma+O(\eps)}.
\end{equation}
The exponent is negative by \(\sigma<\lambda+\Gamma-d-10\eps\), so this branch is uniformly bounded and has a high-output tail.

For the \(Q^{d/2}\)-part, there are two cases.  If \(\sigma<d/2\), then
\[
        \sum_{Q\le c_{\rm ap}N}Q^{d/2-\sigma+\eps}
        \lesssim N^{d/2-\sigma+2\eps}.
\]
The \(N\)-tail is summable because \(\sigma>d-\Gamma+10\eps\) in this case, and it gives
\begin{equation}\label{eq:B-Q-low-sigma}
        M^{\sigma-\lambda}
        \sum_{N\gtrsim M}N^{d-\Gamma-\sigma+O(\eps)}
        \lesssim
        M^{d-\Gamma-\lambda+O(\eps)}.
\end{equation}
The last exponent is negative because the upper end of the window is positive, hence \(\lambda+\Gamma>d\).  If \(\sigma\ge d/2\), then the inner \(Q\)-sum costs only a logarithm and
\begin{equation}\label{eq:B-Q-high-sigma}
        M^{\sigma-\lambda}
        \sum_{N\gtrsim M}N^{d/2-\Gamma+O(\eps)}
        \lesssim
        M^{\sigma-\lambda+d/2-\Gamma+O(\eps)}.
\end{equation}
This is even better than required, because \(\sigma<\lambda+\Gamma-d\) implies
\(
        \sigma-\lambda+d/2-\Gamma<-d/2.
\)
The estimates \eqref{eq:B-M-general}, \eqref{eq:B-Q-low-sigma} and \eqref{eq:B-Q-high-sigma} are negative powers of \(M\) after the strict losses are fixed.  Hence \(\sum_M C_M<\infty\), and the displayed \(\ell^1_M\) domination gives both the \(L^p\)-bound of the Besov supremum and the high-output tail needed for cutoff convergence.  Thus \eqref{eq:centered-B} follows.

\paragraph{Cutoff convergence and completion of the proof.}
Let \(\mathfrak D\) denote the countable set of dyadic triples \((N,Q,M)\) satisfying \(Q\le c_{\rm ap}N\) and \(M\lesssim N\).  For each \((N,Q,M)\in\mathfrak D\), the Galerkin cutoff kernel is eventually constant as \(\Lambda\to\infty\).  We denote this finite-block limit by \(B_{N,Q,M}^{i;j,k}\).

The estimates above give convergence of the dyadic series in \(L^p(\Omega)\).  For every finite subset \(F\subset\mathfrak D\), the finite sum over \(F\) is eventually independent of \(\Lambda\).  On the complement \(\mathfrak D\setminus F\), the Sobolev majorants are \(\ell^1_{N,Q,M}\)-summable and the Besov shell majorants are \(\ell^1_M\)-summable by the strict inequalities in \eqref{eq:window-short}.  Hence the tails are uniformly small in
\[
        L^p\bigl(\Omega;\cL(X_T^\sigma,C_TH^{s-\lambda_i})\bigr)
        \cap
        L^p\bigl(\Omega;\cL(X_T^\sigma,L_T^1B^{\sigma-\lambda_i}_{2,\infty})\bigr),
\]
after \(F\) is chosen large.  This proves \(L^p\)-convergence of \(B_\Lambda^{i;j,k}\) to the dyadic series \(\sum_{\mathfrak D}B_{N,Q,M}^{i;j,k}\).

We now prove almost-sure convergence from the same strict dyadic
summability.  Choose support constants \(0<c_0<C_0<\infty\) for \(\rho_N\)
and set
\[
        \mathscr L_N
        :=\{\Lambda\in2^{\N_0}:c_0N\le\Lambda\le C_0N\}.
\]
For each dyadic triple, let
\[
\mathscr C_{N,Q,M}:=
\{B_{N,Q,M}\}
\cup
\{B_{\Lambda,N,Q,M},
  B_{\Lambda,N,Q,M}-B_{N,Q,M}:\Lambda\in\mathscr L_N\}.
\]
The cardinality of this family is uniformly bounded.  Outside
\(\mathscr L_N\), a sharp-cutoff block is either zero or already equal to its
limit, and the tensor and time-lift estimates hold uniformly for every
member of \(\mathscr C_{N,Q,M}\).

Define the weighted random block norms
\[
\begin{aligned}
\mathcal A^H_{N,Q,M}
&:=M^{s-\lambda_i}Q^{-\sigma}
  \sup_{C\in\mathscr C_{N,Q,M}}
  \|C\|_{C_{t,s}\cL(\ell_q^2,\ell_n^2)},\\
\mathcal A^B_{N,Q,M}
&:=M^{\sigma-\lambda_i}Q^{-\sigma}
  \sup_{C\in\mathscr C_{N,Q,M}}
  \|C\|_{C_{t,s}\cL(\ell_q^2,\ell_n^2)}.
\end{aligned}
\]
The Sobolev calculation above gives
\(\sum_{N,Q,M}\E\mathcal A^H_{N,Q,M}<\infty\), while the
\(\ell^1_M\) Besov calculation gives
\(\sum_{N,Q,M}\E\mathcal A^B_{N,Q,M}<\infty\).  Here we use the
\(p=2\) block estimate and the uniformly bounded cardinality of
\(\mathscr C_{N,Q,M}\).  Fubini's theorem therefore yields, on one
full-probability event,
\[
        \sum_{N,Q,M}
        \bigl(\mathcal A^H_{N,Q,M}+\mathcal A^B_{N,Q,M}\bigr)<\infty
\]
for every label in the finite family.  The deterministic finite-set/tail
argument in Lemma~\ref{lem:dyadic-tail-assembly} now applies pathwise.  As
\(\Lambda\to\infty\), every changing block belongs to the high-frequency
tail of the above summable family, and hence
\(B_\Lambda^{i;j,k}\to B^{i;j,k}\) almost surely in
\eqref{eq:centered-op-topology}.  The pathwise limit agrees with the
\(L^p\)-limit.

Measurability follows by taking rational time pairs and rational unit
vectors in each finite matrix block.  Lemma~\ref{lem:time-lift} supplies a
separable continuous modification, and the summable dyadic series defines
the resulting bounded operator on \(X_T^\sigma\).

For each label \(\tau\in\cM\) with a nonzero retained diagonal, Definition~\ref{def:contraction-resolution} gives fixed-block convergence of \(\Ret^\tau_{\Lambda,N,Q}\) and the dyadic bound \eqref{eq:retained-block-bound}.  Summing \eqref{eq:retained-block-bound} with \(\delta_\tau\) in place of \(\Delta\), exactly as in the proof of Proposition~\ref{prop:diagonal}, gives \eqref{eq:retained-main-bound}.  The convergence of \(\Ret^\tau_\Lambda\) follows by fixed-block convergence on a finite dyadic set and by the summable tail bound on the complement.  If \(\Ret^\tau_\Lambda=0\), set \(\Ret^\tau=0\).

The finite identity \eqref{eq:finite-ren-equals-B} gives
\[
        T_{\Lambda}^{\tau,\mathfrak P}=\Ret^\tau_\Lambda+B^\tau_\Lambda .
\]
Combining deterministic convergence of \(\Ret^\tau_\Lambda\) with the centered convergence of \(B^\tau_\Lambda\) proves the assembled convergence \eqref{eq:assembled-limit}.  The special raw and fully Wick-subtracted choices are the two cases recorded in Remark~\ref{rem:raw-wick}.  This proves Theorem~\ref{thm:main}.

\section{A distinct-speed diagonal estimate}\label{sec:verification}

We verify the retained-diagonal criterion for a distinct-speed wave or
Klein--Gordon contraction.

\begin{proposition}[Distinct-speed wave/Klein--Gordon diagonals]\label{prop:wave-verification}
Let \(\tau=(i;j,j)\) and work in the standard high-frequency wave/Klein--Gordon normalization: the outer Duhamel factor has gain \(\lambda_i=1\), the stochastic convolution leg has gain \(\alpha_j=1\), and hence \(\Gamma_{i;j,j}=3\) in Assumption~\ref{ass:coeff}.  Assume that, at high frequency,
\[
        \omega_a(n)=(m_a^2+c_a^2|n|^2)^{1/2},
        \qquad m_a>0,
        \qquad c_i\ne c_j .
\]
Suppose that
\[
        K_i(t,n)=\frac{\sin(t\omega_i(n))}{\omega_i(n)}
\]
and that \(\Psi_j\) is the corresponding stochastic convolution.  Assume
\(d<4\) and choose \(\eps>0\) so that \(20\eps<4-d\).  Then the same-color
diagonal kernel satisfies Assumption~\ref{ass:diag} on a sufficiently small
proportional aperture \(|q|\le c_{\rm ap}N\), with phase gain \(\vartheta=1\)
and any retained gain \(\Delta\) satisfying
\[
        20\eps<\Delta<4-d.
\]
More precisely, after the trigonometric expansion the principal phases are
\begin{equation}\label{eq:wave-phases}
        \Phi_{\epsilon_1,\epsilon_2}(q,\ell)
        =\epsilon_1\omega_i(q+\ell)+\epsilon_2\omega_j(\ell),
        \qquad \epsilon_1,\epsilon_2\in\{+1,-1\},
\end{equation}
and
\begin{equation}\label{eq:wave-phase-lower}
        |\Phi_{\epsilon_1,\epsilon_2}(q,\ell)|\gtrsim N,
        \qquad |\ell|\sim N,
        \qquad |q|\le c_{\rm ap}N .
\end{equation}
Consequently, in this normalization, a retained same-color diagonal has loop gain
\begin{equation}\label{eq:wave-loop-gain}
        0<\Delta<1+\Gamma_{i;j,j}-d=4-d.
\end{equation}
In dimension three, every fixed \(0<\Delta<1\) is available after choosing
\(\eps\) sufficiently small.
\end{proposition}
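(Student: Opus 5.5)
The plan is to write the same-color covariance of the stochastic convolution explicitly, expand the product of the Duhamel kernel and this covariance into exponentials, and read off the amplitudes and phases required in (D1). For the damped/undamped stochastic wave or Klein--Gordon convolution \(\Psi_j(t)=\int_0^t \frac{\sin((t-u)\omega_j)}{\omega_j}\,dW(u)\), the covariance is
\[
\sigma_{jj}(\ell;s,t)=\int_0^{s}\frac{\sin((s-u)\omega_j(\ell))\sin((t-u)\omega_j(\ell))}{\omega_j(\ell)^2}\,du .
\]
Integrating in \(u\) gives a non-oscillating part \(\frac{s\cos((t-s)\omega_j)}{2\omega_j^2}\) and bounded terms of the form \(\omega_j^{-3}\times\)(trigonometric functions of \(t\omega_j\), \(s\omega_j\)). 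Writing \(\sin(x)=(e^{\ii x}-e^{-\ii x})/2\ii\) for \(K_i(t-s,q+\ell)\), the principal part of \(K_i\sigma_{jj}\) becomes \(\sum_{\epsilon_1,\epsilon_2}e^{\ii(t-s)\Phi_{\epsilon_1,\epsilon_2}(q,\ell)}a_{\epsilon_1,\epsilon_2}\), with the phases \eqref{eq:wave-phases} and amplitudes \(a\sim s\,\omega_i(q+\ell)^{-1}\omega_j(\ell)^{-2}\). On \(|\ell|\sim N\), \(|q|\le c_{\rm ap}N\), these satisfy \(|a|+|\partial_sa|\lesssim N^{-3}=N^{-\Gamma}\), with \(T_0\le1\) and \(\kappa_{jj}=0\). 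This is \eqref{eq:amp-bound}.

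The terms of order \(\omega_j^{-3}\) carry phases involving \(t\omega_j\) or \(s\omega_j\) separately, not of the form \((t-s)\Phi\). For these I would not attempt integration by parts. I would instead place them in the remainder (D2): each is bounded by \(N^{-1}N^{-3}\), so the loop sum gives \(N^{d-4}\le N^{-\Delta}\) for \(\Delta<4-d\), which is \eqref{eq:remainder-loop}. Lower-order corrections from \(m_a>0\) in the high-frequency expansion are handled the same way.

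The main step is the phase lower bound \eqref{eq:wave-phase-lower}. For \(\epsilon_1=\epsilon_2\) it is trivial, since \(|\Phi|\ge\omega_j(\ell)\gtrsim N\). For opposite signs, write
\[
|\omega_i(q+\ell)-\omega_j(\ell)|\ge |c_i|q+\ell|-c_j|\ell||-O(N^{-1}),
\]
using \(\omega_a(n)=c_a|n|+O(1/|n|)\). Then
\[
|c_i|q+\ell|-c_j|\ell||\ge |c_i-c_j||\ell|-c_i|q|\ge(|c_i-c_j|-c_ic_{\rm ap}C)N .
\]
This is \(\gtrsim N\) once \(c_{\rm ap}<|c_i-c_j|/(2Cc_i)\), which is exactly where the small proportional aperture is used. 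Hence \(\vartheta=1\) for every branch.

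Finally, \eqref{eq:osc-gain} reads \(0+1+3-d\ge\Delta\), so any \(\Delta\in(20\eps,4-d)\) is admissible, and this interval is nonempty by the choice \(20\eps<4-d\). This yields \eqref{eq:wave-loop-gain}. For \(d=3\), any \(\Delta<1\) works after shrinking \(\eps\). I expect the obstacle to be bookkeeping rather than conceptual: checking that every piece of the explicit covariance is either of the \((t-s)\Phi\) form with the stated amplitude or absolutely summable, including \(\partial_s\) of the amplitudes.
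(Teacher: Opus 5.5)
Your proposal is correct and follows the paper's proof essentially step for step. The matching steps are the explicit covariance split into the principal term $\frac{s}{2\omega_j^2}\cos((t-s)\omega_j)$ and the $O(\omega_j^{-3})$ terms placed in the remainder (D2) with loop bound $N^{d-4}$, the exponential expansion with amplitude $s\,\omega_i(q+\ell)^{-1}\omega_j(\ell)^{-2}$, and the triangle-inequality bound $|c_i-c_j|\,|\ell|-c_i|q|-O(N^{-1})$ for the difference phase on a small aperture. There are two harmless imprecisions. The $\sin((t-s)\omega_j)$ remainder term does depend only on $t-s$, which is irrelevant since you put it in (D2) anyway. The $O(N^{-1})$ error gives the phase lower bound only for large $N$, so finitely many low shells must be absorbed into constants, as the paper does.
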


\begin{proof}
The covariance of the wave/Klein--Gordon stochastic convolution is, for \(\omega_j=\omega_j(\ell)\),
\begin{equation}\label{eq:wave-cov-identity}
\begin{aligned}
        \sigma_{jj}(\ell;s,t)
        &=\int_0^s\frac{\sin((s-r)\omega_j)}{\omega_j}
        \frac{\sin((t-r)\omega_j)}{\omega_j}\, \dd r  \\
        &=\frac{s}{2\omega_j^2}\cos((t-s)\omega_j)
        +\frac{\sin((t-s)\omega_j)-\sin((t+s)\omega_j)}{4\omega_j^3}.
\end{aligned}
\end{equation}
The two sine terms in \eqref{eq:wave-cov-identity} are absolutely bounded by \(\lesssim N^{-3}\), uniformly for \(0\le s\le t\le T_0\).  After multiplication by the Duhamel factor \(\omega_i(q+\ell)^{-1}\sim N^{-1}\), their high-loop contribution is \(\lesssim N^{d-4}\), which is included in the remainder for every gain below \(4-d\).

For the principal term, expanding \(\sin((t-s)\omega_i(q+\ell))\cos((t-s)\omega_j(\ell))\) into exponentials gives the phases \eqref{eq:wave-phases}.  The amplitude has the form
\[
        a(q,\ell;t,s)=c_0 s\,\omega_i(q+\ell)^{-1}\omega_j(\ell)^{-2}
\]
up to constants and signs independent of the dyadic scales.  Hence
\[
        |a(q,\ell;t,s)|+|\partial_s a(q,\ell;t,s)|
        \lesssim_T N^{-3}=N^{-\Gamma_{i;j,j}}
\]
under the standard wave-gain normalization \(\lambda_i=\alpha_j=1\), which is the amplitude bound required in \eqref{eq:amp-bound}.  It remains to prove the phase lower bound.  The sum phases satisfy \(|\omega_i(q+\ell)+\omega_j(\ell)|\gtrsim N\).  For the difference phase, using \(\omega_a(n)=c_a|n|+O(|n|^{-1})\) and the mean value theorem,
\[
        |\omega_i(q+\ell)-\omega_j(\ell)|
        \ge |c_i-c_j|\,|\ell|-C(1+c_i)|q|-CN^{-1}.
\]
Choosing \(c_{\rm ap}\) sufficiently small in terms of \(c_i,c_j,m_i,m_j\) gives \eqref{eq:wave-phase-lower} for all large \(N\).  The finitely many low dyadic shells are not treated by a phase denominator; their contribution is absorbed into the finite low-frequency constant in the deterministic diagonal estimate.  Thus \(\vartheta=1\), and the loop gain condition in Assumption~\ref{ass:diag} becomes \(1+\Gamma_{i;j,j}-d\ge\Delta\).
\end{proof}

\begin{corollary}[Three-dimensional equal-gain wave/Klein--Gordon block]\label{cor:three-d-wave}
Take \(d=3\) and \(\lambda_i=\alpha_j=\alpha_k=1\).  Then \(\Gamma_{i;j,k}=3\).  The deterministic resonant threshold \(\Gamma>d\) is missed at equality, while the centered window of Definition~\ref{def:admissible} allows, up to the strict losses,
\[
        s<1,
        \qquad
        0<\sigma<1 .
\]
For the distinct-speed same-color contraction in Proposition~\ref{prop:wave-verification}, every fixed retained deterministic gain \(0<\Delta<1\) is available after choosing the losses.  Hence the retained diagonal is admissible for every target exponent \(s<1\).  For this equal-gain branch the centered estimate gives the random operator below the deterministic product threshold, and the same-color diagonal is reduced to the scalar Volterra estimate with gain \(0<\Delta<1\).
\end{corollary}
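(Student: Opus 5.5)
The corollary follows by specialising Definition~\ref{def:admissible}, Proposition~\ref{prop:wave-verification} and Theorem~\ref{thm:main} to \(d=3\), \(\lambda_i=\alpha_j=\alpha_k=1\).

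\emph{Threshold and centered window.} With these gains, \(\Gamma=\lambda_i+\alpha_j+\alpha_k=3=d\). The deterministic resonant condition \(\Gamma>d\) therefore fails exactly at equality, and we are in the singular range \eqref{eq:det-threshold}. In \eqref{eq:centered-window} we have \(\lambda_i+\Gamma-d=1\) and \(\max\{0,d-\Gamma\}=0\). So the conditions become \(\Gamma>3/2+10\eps\), which holds for small \(\eps\), together with \(s<1-10\eps\) and \(10\eps<\sigma<1-10\eps\). This is the equal-gain case of Remark~\ref{rem:thresholds} with \(\alpha=1>2d/7=6/7\), and condition \eqref{eq:nonempty-singular} reads \(2\cdot3+1=7>6\), so the window is non-empty. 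Any fixed pair with \(s<1\), \(0<\sigma<1\) becomes admissible once \(\eps\) is taken below the margins.

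\emph{Retained diagonal.} Since \(d=3<4\), Proposition~\ref{prop:wave-verification} verifies Assumption~\ref{ass:diag} for the distinct-speed same-color contraction, with \(\vartheta=1\) and any \(\Delta\) satisfying \(20\eps<\Delta<4-d=1\). Given a target \(s<1\), first fix \(\Delta\in(\max\{s,0\},1)\) strictly inside this range. Then choose \(\eps\) small enough that \(20\eps<\Delta\), \(s<\Delta-10\eps\), and the centered window above still holds. Fixing \(\Delta\) before \(\eps\) matters here, because \(\eps\) must sit below every strict margin simultaneously.

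Proposition~\ref{prop:diagonal} then shows that the raw resolution \(\Ret=D\), \(\Ctr=0\) is dyadically admissible with any retained gain \(\delta<\Delta\). Choosing \(\delta\) with \(s+10\eps<\delta<\Delta\) and \(\delta>20\eps\) meets the hypotheses of Theorem~\ref{thm:main}.

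\emph{Conclusion.} Theorem~\ref{thm:main} applies: its part (ii) gives the centered random operator below the product threshold, and its part (iii) gives the retained scalar Volterra diagonal.

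The only delicate point is the order in which the small parameters are fixed: \(\Delta\) first, then \(\eps\), then \(\theta\). Everything else is direct substitution.
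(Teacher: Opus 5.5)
Your proposal is correct and matches the paper. The paper gives no separate proof of this corollary; it is the same direct specialization of Definition~\ref{def:admissible}, Remark~\ref{rem:thresholds}, Proposition~\ref{prop:wave-verification}, Proposition~\ref{prop:diagonal} and Theorem~\ref{thm:main}, and your ordering (first \(\Delta\), then \(\eps\), then \(\delta\) and \(\theta\)) agrees with the paper's convention of fixing target exponents before the losses. The one choice you leave implicit is that the fixed aperture \(c_{\rm ap}\) must be small enough in terms of \(c_i,c_j,m_i,m_j\) for the phase bound \eqref{eq:wave-phase-lower}, which the paper also builds into Proposition~\ref{prop:wave-verification}.
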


\appendix

\section{Frequency-envelope extension}\label{subsec:envelope-variant}

\subsection{Gaussian time profiles}
The notation \(\Psi_a\) may be realized by a stochastic convolution, but the arguments below use only the finite Gaussian Hilbert-space structure and the Fourier-diagonal covariance.  The common object is the isonormal representation
\[
        \wh\Psi_a(n,t)=\isoW(h_{a,n,t}).
\]
Space-time white-noise forcing and random initial data are two useful special cases.  For a stochastic convolution driven by a Brownian Fourier mode one may take, for each fixed \(n\),
\[
        h_{a,n,t}(r)=\one_{0\le r\le t}K_a(t-r,n)b_a(n)
        \quad\hbox{in }L^2([0,T_0]),
\]
so that \(\isoW(h_{a,n,t})=\int_0^tK_a(t-r,n)b_a(n)\,\dd\beta_{a,n}(r)\).  For random initial data evolved by a linear flow one may instead take
\[
        h_{a,n,t}=e^{\ii t\phi_a(n)}a_a(n)e_{a,n},
        \qquad
        \wh\Psi_a(n,t)=e^{\ii t\phi_a(n)}a_a(n)g_{a,n},
\]
or the analogous finite sum of sine and cosine profiles in wave variables.  The first example has Volterra time covariance, while the second has rank-one or finite-rank time covariance.  Both satisfy the argument below whenever the stated dyadic amplitude and increment bounds hold.

\subsection{Envelope assumptions and theorem}

The power-law assumptions in Assumption~\ref{ass:coeff} can be replaced by dyadic envelopes without changing the finite Wick split or the centered tensor estimate.  The resulting theorem allows frequency-dependent amplitudes and non-Volterra time covariance.

Let \(K_i(N)>0\) be the dyadic size of the Duhamel multiplier in channel \(i\), let \(A_a(N)>0\) be the dyadic size of the Gaussian leg of color \(a\), and let \(\Theta_i(N),\Theta_a(N)\ge1\) be the corresponding time-oscillation scales.  The envelope coefficient assumptions are
\begin{align}
        |K_i(t,n)|&\le C K_i(N), \label{eq:env-K}\\
        |K_i(t,n)-K_i(t',n)|&\le C_\theta |t-t'|^\theta K_i(N)\Theta_i(N)^\theta, \label{eq:env-K-increment}\\
        \|h_{a,n,t}\|_{\mathfrak H}&\le C A_a(N), \label{eq:env-A}\\
        \|h_{a,n,t}-h_{a,n,t'}\|_{\mathfrak H}&\le C_\theta |t-t'|^\theta A_a(N)\Theta_a(N)^\theta, \label{eq:env-A-increment}
\end{align}
for \(|n|\sim N\), \(0\le t,t'\le T_0\), and \(0<\theta\le\eta_0\).  The covariance is still Fourier diagonal as in \eqref{eq:conv-cov}.  For a mixed label \(\tau=(i;j,k)\) define
\begin{equation}\label{eq:env-GTheta}
        G_\tau(N):=K_i(N)A_j(N)A_k(N),
        \qquad
        \Theta_\tau(N):=1+\Theta_i(N)+\Theta_j(N)+\Theta_k(N).
\end{equation}
We assume the following time-lift absorption condition.  For every strict dyadic loss \(\eps>0\) used in the summation, there exists \(0<\theta\le\eta_0\) such that
\begin{equation}\label{eq:time-lift-absorb}
        \sup_{N\ge1}\Theta_\tau(N)^\theta N^{-\eps}<\infty
\end{equation}
for every label in the finite family.  In the power-law case \(K_i(N)=N^{-\lambda_i}\), \(A_a(N)=N^{-\alpha_a}\), \(\Theta_i(N)=N^{\lambda_i}\), \(\Theta_a(N)=N^{\alpha_a}\), this is exactly the small-\(\theta\) loss absorption used in Assumption~\ref{ass:coeff}.

For \(\beta,\zeta,\sigma\in\mathbb R\), define the centered envelope majorants
\begin{align}
\mathfrak S_{\tau,H}^{\eps}(\beta,\sigma)
&:=\sum_N\sum_{Q\le c_{\rm ap}N}\sum_{M\lesssim N}
        M^\beta Q^{-\sigma}
        N^{d/2+\eps}G_\tau(N)
        \bigl(M^{d/2+\eps}+Q^{d/2+\eps}\bigr), \label{eq:env-SH}\\
\mathfrak b_{\tau,M}^{\eps}(\zeta,\sigma)
&:= M^\zeta\sum_{N\gtrsim M}\sum_{Q\le c_{\rm ap}N}
        Q^{-\sigma}
        N^{d/2+\eps}G_\tau(N)
        \bigl(M^{d/2+\eps}+Q^{d/2+\eps}\bigr), \label{eq:env-bM}\\
\mathfrak S_{\tau,B}^{\eps}(\zeta,\sigma)
&:=\sum_M \mathfrak b_{\tau,M}^{\eps}(\zeta,\sigma). \label{eq:env-SB}
\end{align}
Here \(\beta\) is the Sobolev output exponent and \(\zeta\) is the Besov output exponent.  The \(\ell^1_M\) condition in \eqref{eq:env-SB} is part of the convergence hypothesis: a uniform bound \(\sup_M\mathfrak b_{\tau,M}^{\eps}<\infty\) gives boundedness of individual output shells, whereas norm convergence in \(B_{2,\infty}^\zeta\) requires a high-output tail.  The summability in \(M\) supplies this tail.  The power-law theorem is recovered by taking
\begin{equation}\label{eq:power-specialization}
        \beta=s-\lambda_i,
        \qquad
        \zeta=\sigma-\lambda_i,
        \qquad
        G_\tau(N)=N^{-\lambda_i-\alpha_j-\alpha_k}.
\end{equation}
Then the finiteness of \eqref{eq:env-SH} and \eqref{eq:env-SB} is precisely the dyadic content of the centered window in Definition~\ref{def:admissible}, up to the displayed strict losses.

With \(X_T^\sigma\) as in \eqref{eq:X-space}, set
\begin{equation}\label{eq:env-source-target}
        Y_T^{\beta,\zeta}:=C_TH^\beta\cap L_T^1B_{2,\infty}^\zeta.
\end{equation}
For retained deterministic diagonals in the envelope formulation, let \(\mathcal X_T\) be a model-dependent Banach source space equipped with dyadic localized seminorms \(\|P_Qw\|_{\mathcal X_{T,Q}}\).  Put
\[
        \mathfrak D_2:=\{(N,Q):Q\le c_{\rm ap}N\}.
\]
We assume the localization is compatible with the retained dyadic majorant in the following tail sense: for every directed family of finite subsets \(F\subset\mathfrak D_2\) increasing to \(\mathfrak D_2\),
\begin{equation}\label{eq:env-X-localized}
        \sum_{(N,Q)\in\mathfrak D_2} b_\tau(N,Q)\|P_Qw\|_{\mathcal X_{T,Q}}
        \le C_{\mathcal X}\|w\|_{\mathcal X_T},
        \qquad
        \sup_{\|w\|_{\mathcal X_T}\le1}\sum_{(N,Q)\in\mathfrak D_2\setminus F} b_\tau(N,Q)\|P_Qw\|_{\mathcal X_{T,Q}}
        \longrightarrow0 .
\end{equation}
A sufficient special case is \(\sup_Q\|P_Qw\|_{\mathcal X_{T,Q}}\le C\|w\|_{\mathcal X_T}\) together with \(\sum_{(N,Q)\in\mathfrak D_2}b_\tau(N,Q)<\infty\).  The retained branches are required to satisfy directly the target bound
\begin{equation}\label{eq:env-retained-bound}
        \|\Ret_{\Lambda,N,Q}^\tau P_Qw\|_{Y_T^{\beta,\zeta}}
        \le C_{\mathfrak P} b_\tau(N,Q)\|P_Qw\|_{\mathcal X_{T,Q}},
        \qquad
        b_\tau(N,Q)\ge0.
\end{equation}
The power-law Volterra criterion of Definition~\ref{def:contraction-resolution} and Proposition~\ref{prop:diagonal} implies \eqref{eq:env-retained-bound} and \eqref{eq:env-X-localized} in the corresponding power-law cases.  The intersection \(X_T^\sigma\cap\mathcal X_T\) is equipped with the sum norm.

\begin{theorem}[Frequency-envelope variant]\label{thm:envelope}
Fix \(0<T\le T_0\) and a finite family \(\mathcal M\) of mixed labels.
Assume the Fourier-diagonal covariance model \eqref{eq:conv-cov} together with the envelope coefficient bounds \eqref{eq:env-K}--\eqref{eq:env-A-increment}.  Let \((\beta_\tau,\zeta_\tau,\sigma)\) be such that, for every \(\tau\in\mathcal M\), the two quantities \(\mathfrak S_{\tau,H}^{\eps}(\beta_\tau,\sigma)\) and \(\mathfrak S_{\tau,B}^{\eps}(\zeta_\tau,\sigma)\) in \eqref{eq:env-SH}--\eqref{eq:env-SB} are finite, with \(\eps>0\) leaving room for the time-lift absorption \eqref{eq:time-lift-absorb}.  Then, for every finite \(p\ge2\), the centered cutoff operators satisfy
\begin{equation}\label{eq:env-centered-conv}
        B_\Lambda^\tau\longrightarrow B^\tau
        \quad\hbox{in}\quad
        L^p\bigl(\Omega;\mathcal L(X_T^\sigma,Y_T^{\beta_\tau,\zeta_\tau})\bigr)
\end{equation}
and almost surely in the corresponding operator norm.  If, in addition, a deterministic diagonal resolution has been chosen so that the retained branches satisfy \eqref{eq:env-retained-bound}, the localized source-tail bound \eqref{eq:env-X-localized}, and fixed-block convergence in \(Y_T^{\beta_\tau,\zeta_\tau}\), then
\begin{equation}\label{eq:env-assembled-conv}
        T_\Lambda^{\tau,\mathfrak P}=T_\Lambda^\tau-\Ctr^\tau_\Lambda
        \longrightarrow \Ret^\tau+B^\tau
        \quad\hbox{in}\quad
        \mathcal L(X_T^\sigma\cap\mathcal X_T,Y_T^{\beta_\tau,\zeta_\tau})
\end{equation}
almost surely and in \(L^p(\Omega)\).  Fully Wick-subtracted branches are obtained by setting \(\Ret^\tau_\Lambda=0\).
\end{theorem}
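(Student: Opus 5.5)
The plan is to rerun the proof of Theorem~\ref{thm:main} with every power of \(N\) replaced by the corresponding envelope quantity. The structural steps do not depend on power laws. The finite Wick split of Lemma~\ref{lem:color-split} uses only Fourier diagonality \eqref{eq:conv-cov}, so \(T^\tau_\Lambda=D^\tau_\Lambda+B^\tau_\Lambda\) holds unchanged. In Lemma~\ref{lem:local-normal-form} we normalize by \(A_a(N)\) instead of \(N^{-\alpha_a}\) using \eqref{eq:env-A}. For increments we normalize by \(|t-t'|^\theta A_a(N)\Theta_a(N)^\theta\) using \eqref{eq:env-A-increment}. This gives the normal form \eqref{eq:centered-normal-form} with coefficient bound \(|H|\lesssim G_\tau(N)\) in place of \eqref{eq:H-size}. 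Increment tensors carry the additional factor \(|t-t'|^\theta\Theta_\tau(N)^\theta\), obtained from \eqref{eq:env-K-increment} and the telescope along the simplex.

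Next, the incidence counts of Lemma~\ref{lem:incidence-flattenings} involve only the support \eqref{eq:H-support} and the sup bound on entries. They therefore give \(\mathfrak R_4(H)\lesssim N^{d/2}G_\tau(N)(M^{d/2}+Q^{d/2})\). Proposition~\ref{prop:tensor}, applied after the decoupling \eqref{eq:local-decoupling}, yields the fixed-time estimate with the logarithm absorbed into \(N^{\eps}\). We then invoke Lemma~\ref{lem:time-lift} with \(A_N=N^{d/2+\eps}G_\tau(N)(M^{d/2+\eps}+Q^{d/2+\eps})\). Its increment hypothesis carries the factor \(\Theta_\tau(N)^\theta\) instead of \(N^{\chi\theta}\). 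The proof of that lemma only needs this factor to be \(\lesssim N^{\eps}\), and this is exactly the absorption condition \eqref{eq:time-lift-absorb} for a suitable \(\theta\). The outcome is the envelope analogue of \eqref{eq:time-lift}: a \(C_{t,s}\cL(\ell_q^2,\ell_n^2)\) bound in \(L^p(\Omega)\) with at most an extra \(N^{\eps}\), which we absorb by slightly enlarging \(\eps\) in the definition of the majorants.

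The summation step replaces the case analysis of Section~\ref{sec:assembly}. Inserting \(P_Qw\) as in \eqref{eq:insert-L2} and using \(\|P_Qw\|_{L^\infty_TL^2}\le Q^{-\sigma}\|w\|_{X^\sigma_T}\), the Sobolev block majorants are exactly the summands of \(\mathfrak S^{\eps}_{\tau,H}(\beta_\tau,\sigma)\). For the Besov target, the output-shell majorants are \(\mathfrak b^{\eps}_{\tau,M}(\zeta_\tau,\sigma)\), and these are \(\ell^1_M\)-summable by finiteness of \(\mathfrak S^{\eps}_{\tau,B}\). Lemma~\ref{lem:dyadic-tail-assembly} then yields \(L^p\) convergence. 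For the almost-sure statement we form the finite families \(\mathscr C_{N,Q,M}\) of cutoff blocks and their differences, exactly as before. Summable \(L^1(\Omega)\) majorants and Fubini give pathwise Cauchy convergence on a single full-measure event for the finite family \(\mathcal M\).

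For the retained part, \eqref{eq:env-retained-bound} bounds each dyadic retained block in \(Y_T^{\beta_\tau,\zeta_\tau}\) by \(b_\tau(N,Q)\|P_Qw\|_{\mathcal X_{T,Q}}\). The first condition in \eqref{eq:env-X-localized} gives the bounded limit. Its uniform tail condition, combined with fixed-block convergence on finite sets \(F\), gives operator-norm Cauchy convergence. This is Lemma~\ref{lem:dyadic-tail-assembly} with the deterministic \(\ell^1\) majorant replaced by the uniform tail over the unit ball. Adding this to the centered convergence through \eqref{eq:finite-ren-equals-B} gives \eqref{eq:env-assembled-conv} on the sum-normed space \(X^\sigma_T\cap\mathcal X_T\).

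I expect the main obstacle to be the time lift. The estimate of Lemma~\ref{lem:time-lift} must absorb \(\Theta_\tau(N)^\theta\) uniformly across all triples, and \(\theta\) has to be chosen after \(\eps\) for all labels simultaneously. I would handle this by taking the minimum of the finitely many \(\theta\) supplied by \eqref{eq:time-lift-absorb}. This is valid because \(\Theta_\tau\ge1\), so decreasing \(\theta\) preserves the absorption. The factor \(|t-t'|^\theta\) is also weaker for smaller \(\theta\), but since \(T_0\le1\) this only changes constants.
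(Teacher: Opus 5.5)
Your proposal is correct and follows the paper's own proof essentially step for step. The shared steps are: envelope normalization in Lemma~\ref{lem:local-normal-form} giving \(|H|\lesssim G_\tau(N)\), unchanged flattening counts, and the time lift with \(\Theta_\tau(N)^\theta\) absorbed via \eqref{eq:time-lift-absorb}. They continue with the majorants \(\mathfrak S^{\eps}_{\tau,H}\) and \(\mathfrak b^{\eps}_{\tau,M}\) fed into Lemma~\ref{lem:dyadic-tail-assembly}, Fubini over the finite families \(\mathscr C_{N,Q,M}\) for almost-sure convergence, and the uniform tail in \eqref{eq:env-X-localized} for the retained branch.

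One point of loss bookkeeping should be phrased more carefully. \(\mathfrak S^{\eps}\) increases with \(\eps\), and only finiteness at the given \(\eps\) is assumed. You should therefore run the logarithmic and time-lift losses at a fraction of \(\eps\) (e.g.\ choose \(\theta\) in \eqref{eq:time-lift-absorb} for \(\eps/2\)), rather than enlarge \(\eps\) in the majorants.
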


\subsection{Proof of the frequency-envelope variant}\label{subsec:proof-envelope}

We prove Theorem~\ref{thm:envelope} by repeating the preceding argument with the power weights replaced by the prescribed envelopes.

The Wick split is unchanged.  Fourier-diagonal covariance gives
\(\ell+r=0\) in the scalar contraction, hence \(n=q\) for the deterministic
diagonal branch, while the centered branch keeps the incidence
\(n=q+\ell+r\).  Lemma~\ref{lem:color-split} and
\eqref{eq:finite-ren-equals-B} give the same finite decomposition.

The local normalization of Lemma~\ref{lem:local-normal-form} is performed
after factoring out the envelope sizes.  For \(|\xi|\sim N\), write
\[
        K_i(t,\xi)=K_i(N)\bar K_i(t,\xi),
        \qquad
        h_{a,\xi,t}=A_a(N)\bar h_{a,\xi,t},
\]
where the normalized quantities are uniformly bounded in the sense of \eqref{eq:env-K} and \eqref{eq:env-A}.  Hence the centered coefficient tensor satisfies
\begin{equation}\label{eq:env-H-bound}
        |H_{\nu;a,b,c,e}(t,s)|
        \lesssim G_\tau(N)
\end{equation}
with \(G_\tau(N)=K_i(N)A_j(N)A_k(N)\).  The whitening remains local on each Fourier involution class and therefore does not change the spatial incidence.

Proposition~\ref{prop:tensor} and Lemma~\ref{lem:incidence-flattenings}
give the fixed-time matrix estimate
\begin{equation}\label{eq:env-fixed-matrix}
\Bigl\|\,\|B_{\Lambda,N,Q,M}^\tau(t,s)\|_{\ell_q^2\to\ell_n^2}\Bigr\|_{L^p(\Omega)}
\lesssim_{p,\eps}
        N^{d/2+\eps}G_\tau(N)
        \bigl(M^{d/2+\eps}+Q^{d/2+\eps}\bigr).
\end{equation}
The logarithmic factor in Proposition~\ref{prop:tensor} is again absorbed into the displayed \(\eps\)-loss.

For the time lift, expand a time difference by the finite telescope used
above.  Each summand contains either a Duhamel increment bounded by
\eqref{eq:env-K-increment} or a Gaussian-leg increment bounded by
\eqref{eq:env-A-increment}.  Thus the fixed-time majorant in
\eqref{eq:env-fixed-matrix} is multiplied by
\(\Theta_\tau(N)^\theta |t-t'|^\theta\) or by
\(\Theta_\tau(N)^\theta |s-s'|^\theta\).  Assumption
\eqref{eq:time-lift-absorb} absorbs this factor into an additional
\(N^\eps\) in Lemma~\ref{lem:time-lift}.  After decreasing the displayed
\(\eps\) in the summability assumptions, we obtain
\begin{equation}\label{eq:env-time-matrix}
\Bigl\|\,\|B_{\Lambda,N,Q,M}^\tau\|_{C_{t,s}\cL(\ell_q^2,\ell_n^2)}\Bigr\|_{L^p(\Omega)}
\lesssim_{p,T,\eps}
        N^{d/2+\eps}G_\tau(N)
        \bigl(M^{d/2+\eps}+Q^{d/2+\eps}\bigr).
\end{equation}

Inserting a deterministic input and using
\(\|P_Qw\|_{L_T^\infty L^2}\le Q^{-\sigma}\|w\|_{X_T^\sigma}\),
\eqref{eq:env-time-matrix} gives
\begin{equation}\label{eq:env-inserted-block}
        \|P_MB_{N,Q,M}^\tau P_Qw\|_{L^p(\Omega;C_TL^2)}
        \lesssim
        N^{d/2+\eps}G_\tau(N)
        \bigl(M^{d/2+\eps}+Q^{d/2+\eps}\bigr)
        Q^{-\sigma}\|w\|_{X_T^\sigma} .
\end{equation}
Multiplication by the Sobolev weight \(M^{\beta_\tau}\) and summation over \((N,Q,M)\) is precisely \(\mathfrak S_{\tau,H}^{\eps}(\beta_\tau,\sigma)\).  For the Besov component, define the shell random variable
\[
        X_M= M^{\zeta_\tau}
        \left\|P_M\sum_{N\gtrsim M}\sum_{Q\le c_{\rm ap}N}
        B_{N,Q,M}^\tau P_Q\right\|_{\cL(X_T^\sigma,L_T^1L^2)} .
\]
By Minkowski's inequality and \eqref{eq:env-inserted-block}, \(\|X_M\|_{L^p(\Omega)}\) is bounded by \(\mathfrak b_{\tau,M}^{\eps}(\zeta_\tau,\sigma)\).  Therefore
\[
        \left\|\sup_M X_M\right\|_{L^p(\Omega)}
        \le \sum_M \|X_M\|_{L^p(\Omega)}
        \le \mathfrak S_{\tau,B}^{\eps}(\zeta_\tau,\sigma).
\]
Together with the Sobolev summation this gives the uniform centered bound in
\[
        L^p\bigl(\Omega;\cL(X_T^\sigma,Y_T^{\beta_\tau,\zeta_\tau})\bigr).
\]

For convergence, fix a dyadic triple \((N,Q,M)\).  The Galerkin block is
eventually independent of \(\Lambda\).  The \(\ell^1\) majorant
\(\mathfrak S_{\tau,H}^{\eps}\) controls the Sobolev tail, while
\(\mathfrak S_{\tau,B}^{\eps}\) controls the Besov output tail.
Lemma~\ref{lem:dyadic-tail-assembly} therefore gives the \(L^p\)-convergence
in \eqref{eq:env-centered-conv}.

For the almost-sure assertion, apply the \(p=2\) block estimate to the finite
full and boundary-cutoff family \(\mathscr C_{N,Q,M}\) used in the proof of
Theorem~\ref{thm:main}.  Multiply its supremum by
\(M^{\beta_\tau}Q^{-\sigma}\) for the Sobolev component and by
\(M^{\zeta_\tau}Q^{-\sigma}\) for the Besov component.  The expectations of
the resulting random majorants are summable, respectively, by
\(\mathfrak S_{\tau,H}^{\eps}<\infty\) and
\(\mathfrak S_{\tau,B}^{\eps}<\infty\).  Fubini gives pathwise summability,
and Lemma~\ref{lem:dyadic-tail-assembly} then yields almost-sure
operator-norm convergence.

If a deterministic diagonal resolution is given, \eqref{eq:env-retained-bound}, \eqref{eq:env-X-localized} and fixed-block convergence imply convergence of the retained branch by the same finite-block plus summable-tail argument.  Indeed, the tail outside a finite dyadic set is bounded by
\[
        C_{\mathfrak P}\sum_{(N,Q)\notin F} b_\tau(N,Q)\|P_Qw\|_{\mathcal X_{T,Q}},
\]
and this tends to zero uniformly for \(\|w\|_{\mathcal X_T}\le1\) by \eqref{eq:env-X-localized}.  Adding this deterministic limit to the centered limit and using \eqref{eq:finite-ren-equals-B} proves \eqref{eq:env-assembled-conv}.  The case \(\Ret^\tau_\Lambda=0\) is the fully Wick-subtracted branch.  This proves Theorem~\ref{thm:envelope}.

\phantomsection
\pdfbookmark[1]{References}{references}

\end{document}